\newtheorem{theorem}{Theorem}[section]
\newtheorem{lemma}[theorem]{Lemma}
\newtheorem{proposition}[theorem]{Proposition}
\newtheorem{cor}[theorem]{Corollary}
\theoremstyle{definition}
\newtheorem{definition}[theorem]{Definition}
\theoremstyle{remark}
\newtheorem{remark}[theorem]{Remark}
\numberwithin{equation}{section}
\newcommand{\rr}{\mathbb R}
\newcommand{\cc}{\mathbb C}
\newcommand{\pp}{\mathbb P}
\newcommand{\be}{\mathbb E}
\newcommand{\scc}{\mathscr C}
\newcommand{\al}{\alpha}
\newcommand{\gm}{\gamma}
\newcommand{\ep}{\varepsilon}
\newcommand{\tht}{\theta}
\newcommand{\om}{\omega}
\newcommand{\lmd}{\lambda}
\newcommand{\bn}{\mathbf n}
\newcommand{\bk}{\mathbf k}
\newcommand{\se}{\frac{2}{3}}
\begin{document}

\title[infinite spin]{The problem of infinite Spin for parabolic and collision solutions in the planar $n$-body problem}
 

\author{Zhe Wang}
\address{Chern Institute of Mathematics and LPMC, Nankai University, Tianjin, China}
\email{zhewang@mail.nankai.edu.cn}

\author{Guowei Yu}
\address{Chern Institute of Mathematics and LPMC, Nankai University, Tianjin, China}
\email{yugw@nankai.edu.cn}

\thanks{This work is supported by the National Key R\&D Program of China (2020YFA0713303), NSFC (No. 12171253), Nankai Zhide Foundation and the Fundamental Research Funds for the Central Universities.}

\begin{abstract} 
In the planar $n$-body problem, the problem of infinite spin occurs for both parabolic and collision solutions. Recently Moeckel and Montgomery \cite{MM25} showed that there is no infinite spin for total collision solutions, when the reduced and normalized configuration converges to an isolated central configuration. Following their approach, we show it can not happen for both complete and partially parabolic solutions, under similar conditions. Our approach also allows us to generalize Moeckel and Montgomery's result to partial collision solutions under similar conditions. 

\end{abstract}

\maketitle

\section{Introduction}
Consider the planar $n$-body problem with masses $m_i>0$, positions $q_i\in\mathbb{R}^2$, for each $i \in \bn$ $=\{1, 2, \dots, n\}$. The motion is governed by Newton's gravitational law
\begin{equation} \label{eq;n-body}
 m_i\ddot{q_i}=\nabla_iU(q), \; \forall i \in \bn,
\end{equation}
where $q=(q_i)_{i \in \bn} \in\mathbb{R}^{2n}$ and
$$  U(q)=\underset{i<j}{\sum}\frac{m_im_j}{r_{ij}}, \; r_{ij}=|q_i-q_j|. $$
Due to translation symmetry, we may always fix the center of mass at origin, i.e., 
$$ q \in \be :=\{ q \in \rr^{2n}: \sum_{i \in \bn} m_i q_i =0\}. $$

One of the most important aspects of the $n$-body problem is to understand the final motion of the masses as time goes to infinity. The fundamental work was done by Chazy \cite{Chazy22}, where he gave a complete classification of the final motion for the three body problem. For general $n$, it was studied by Pollard \cite{Pl67} and Saari \cite{Sr71}. In particular   Marchal and Saari obtained the following result in \cite[Corollary 1]{MS76}. 

\begin{theorem} \label{thm;March-Saari}
Given a solution $q(t)$ of \eqref{eq;n-body} with 
$$ R(t) = \max_{i \ne j} r_{ij}(t), \; r(t) = \min_{ i \ne j } r_{ij}(t)$$
then as $t \to \infty$, either $R(t)/t \to \infty$, or 
$$ q_i(t) = A_i t + O(t^{\se}), \; i \in \bn \text{ and } \limsup_{t \to \infty} r(t) >0 $$
where each $A_i \in \rr^2$ is a constant vector. 
\end{theorem}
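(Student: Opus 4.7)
The plan is to combine the Lagrange-Jacobi identity with Pollard's classification of final motions, and then to isolate the parabolic $O(t^{2/3})$ rate via a cluster decomposition. Let $I(t) = \sum_i m_i |q_i|^2$. The Lagrange-Jacobi identity reads $\ddot I = 4h + 2U$, where $h$ is the total energy; and because the center of mass is fixed at the origin, the Lagrange identity $M I = \sum_{i<j} m_i m_j r_{ij}^2$ with $M = \sum_i m_i$ shows that $R(t)$ and $\sqrt{I(t)}$ are comparable up to mass-dependent constants. Pollard's theorem guarantees that $\lim_{t\to\infty} I(t)/t^2$ exists in $[0,\infty]$, and $R(t)/t \to \infty$ is equivalent to this limit being $+\infty$. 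So I may assume $I(t)/t^2 \to c < \infty$ and aim for the second alternative.

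In this finite regime, the classical Pollard-Saari argument (based on energy conservation and integrability properties of $U$ along the trajectory via Sundman-type inequalities) yields the existence of asymptotic velocities: $\dot q_i(t) \to A_i$ for each $i$, and hence $q_i(t) = A_i t + o(t)$. I would then partition $\bn$ into clusters $\mathcal{C}_1,\dots,\mathcal{C}_k$ via the equivalence $i \sim j \iff A_i = A_j$. Distances between distinct clusters grow linearly, $r_{ij}(t) \sim |A_i - A_j|\, t$, so inter-cluster Newtonian forces are $O(t^{-2})$; this gives $\ddot Q_a = O(t^{-2})$ for each cluster center of mass $Q_a$, hence $Q_a(t) = A^{(a)} t + O(\log t)$, where $A^{(a)}$ is the common value of $A_i$ on $\mathcal{C}_a$.

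Inside each cluster, the relative motion satisfies an $n$-body equation perturbed by $O(t^{-2})$ inter-cluster forces. Applying a Sundman-type inequality $\dot{\tilde I}_a^2 \le C\, \tilde I_a \tilde U_a$ on the internal moment of inertia, together with $\ddot{\tilde I}_a = 2\tilde U_a + O(1) + O(t^{-2})$, a standard bootstrap yields the parabolic-type bound $\tilde I_a(t) = O(t^{4/3})$ and therefore $|q_i(t) - A_i t| = O(t^{2/3})$ for every $i \in \bn$. To exclude $\limsup_{t \to \infty} r(t) = 0$, note that otherwise all mutual distances $r_{ij}(t) \to 0$, which forces $A_i = A_j$ for every pair, and the center-of-mass constraint then gives $A_i \equiv 0$; the entire system would be tending to total collapse at $t = +\infty$, contradicting conservation of energy since $T = U + h \to \infty$ cannot coexist with the bounded moment of inertia that such a collapse at infinity would require.

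The main obstacle is the extraction of the sharp $2/3$ exponent in the intra-cluster estimate. The $O(\log t)$ bound for the cluster centers is a soft consequence of the $O(t^{-2})$ decay of inter-cluster forces, but within a cluster the interactions are of the original $1/r$ type and need not decay, so one must feed the crude bound $q_i = A_i t + o(t)$ back into the Sundman inequality with care. Getting the correct $t^{2/3}$ rate rather than a weaker $o(t)$ estimate is precisely the delicate point, and rests on the finer Pollard/Saari estimates that track the growth of the moment of inertia in the direction orthogonal to the asymptotic velocity.
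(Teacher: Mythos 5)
A preliminary remark: the paper offers no proof of this statement --- it is quoted as Corollary 1 of Marchal and Saari \cite{MS76} --- so your attempt must be judged as a self-contained proof of that classical theorem, and as such it is an outline of the known strategy rather than a proof. The two analytically hard points, namely the existence of the limits $A_i=\lim_{t\to\infty}q_i(t)/t$ once $R(t)=O(t)$ is assumed, and the upgrade of the remainder from $o(t)$ to the sharp $O(t^{2/3})$, are precisely the content of the theorem; you defer the first to ``the classical Pollard--Saari argument'' and the second to ``a standard bootstrap'', and your own closing paragraph concedes that the extraction of the exponent $2/3$ is the unresolved crux. (The preliminary reductions are acceptable: $MI=\sum_{i<j}m_im_jr_{ij}^2$ does make $R$ and $\sqrt{I}$ comparable, and the dichotomy for $\lim I/t^2$ is a genuine classical result.)

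Two steps are also incorrect as written. First, the claim $\dot q_i(t)\to A_i$ is false in general: already for a bounded elliptic binary in the two-body problem (where $R(t)=O(1)$ puts you in the second alternative with $A_i=0$) the velocities oscillate forever and do not converge; only $q_i(t)/t\to A_i$ can hold, and the cluster decomposition must be built on that weaker statement. Second, your exclusion of $\limsup_{t\to\infty}r(t)=0$ misreads the quantity $r(t)=\min_{i\ne j}r_{ij}(t)$: its vanishing only means that at each large time \emph{some} pair (possibly a different pair at different times) is close, and in no way forces all mutual distances to tend to zero or all the $A_i$ to coincide, so the ``total collapse at infinity'' contradiction never gets started. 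A correct and simpler argument in the regime $I=O(t^2)$ is available: $r(t)\to 0$ forces $U(t)\to\infty$, hence $\ddot I=2U+4h\to\infty$, hence $I(t)/t^2\to\infty$, contradicting the standing assumption.
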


The above result shows that when $R(t) = O(t)$, as $t \to \infty$, the entire system separates into subsystems, such that the distance between any two masses inside each subsystem is bounded by $O(t^{\se})$, while the distance between any two masses belong to different subsystem grows linearly as $t$. This shows that a critical case is when the distance between two masses grows in the order of $t^{\se}$. Such a critical case is called \emph{parabolic}, after parabolic solutions from the Kepler problem.

Motivated by this, we introduce the following definition.
\begin{definition}
\label{df;k-para} Given a $\bk \subset \bn$ with $|\bk| \ge 2$, we say a solution $q: (t_0, \infty) \to \be$ is $\bk$-parabolic, if there exist positive constants $C_i$, $i =1, 2, 3$, such that when $t \to \infty$, 
$$ 
\begin{cases}
C_1 t^{\se} \le r_{ij}(t) \le C_2 t^{\se}, & \; \forall i \ne j \in \bk; \\
r_{ij}(t) \ge C_3 t, & \; \forall i \in \bk, \forall j \in \bk':= \bn \setminus \bk.
\end{cases} 
$$
\end{definition}

\begin{remark}
For any subset $\bk$ of indices, $|\bk|$ denotes its cardinality. Notice that in the above definition, we do not assume $R(t) = O(t)$, as $t \to \infty$.
\end{remark}

 A $\bk$-parabolic solution is partially parabolic, if $|\bk| < n$, and complete parabolic, if $|\bk|=n$. Notice that there may be more than one subsystem, such that inside it the motion is parabolic. For results about existence of total and partial parabolic solutions with arbitrary initial configuration, we refer the readers to \cite{MV09} and \cite{PT24}.

If the motion of a subsystem is parabolic, the normalized configuration of masses in the subsystem must approach the set of central configuration. To explain this in details, we introduce the following notations. 

Given a subset $\bk \subset \bn$, we denote the center of mass of the $\bk$-subsystem as
$$ c_{\bk} = m^{-1}_{\bk} \sum_{i \in \bk} m_i q_i, \text{ where } m_{\bk} = \sum_{i \in \bk} m_i,$$
the relative configuration of the $\bk$-subsystem with respect to $c_{\bk}$ as 
$$ q^c_{\bk}= (q_i - c_{\bk})_{i \in \bk} \in \be_{\bk} :=\{ (q_i)_{i \in \bk} \in \rr^{2 |\bk|}: \sum_{i \in \bk} m_i q_i =0\}, $$
the moment of inertial of the $\bk$-subsystem with respect to $c_{\bk}$ as 
$$ I_{\bk}(q^c_{\bk})= \sum_{i \in \bk} m_i|q_i - c_{\bk}|^2,$$
and the normalized relative configuration with respect to $c_{\bk}$ as
$$ \hat{q}^c_{\bk}  = q^c_{\bk}/ \sqrt{I_{\bk}(q^c_{\bk})}  \in S_{\bk}, $$
where 
$$ S_{\bk} := \{ q_{\bk} \in \be_{\bk}: I_{\bk}(q_{\bk})=1 \}.$$
 
The energy of the $\bk$-subsystem is defined as 
$$ h_\mathbf{k}=K_\mathbf{k}(q^c_{\bk})-U_\mathbf{k}(q^c_{\bk}) := \underset{i\in\mathbf{k}}{\sum}\frac{m_i}{2}|\dot{q_i}- \dot{c}_{\bk}|^2 -\underset{i,j\in\mathbf{k},i<j}{\sum}\frac{m_im_j}{r_{ij}}.$$
The potential function between the $\bk$-subsystem and the rest of the masses is denoted by
\begin{equation*}
    U_{\mathbf{k},\mathbf{k^\prime}}=U-U_{\mathbf{k}}-U_{\mathbf{k^\prime}}=\underset{i\in\mathbf{k},j\in\mathbf{k^\prime}}{\sum}\frac{m_im_j}{r_{ij}}. 
\end{equation*}

\begin{definition}
\label{df;cc} $q_{\bk} = (q_i)_{i \in \bk} \in \be_{\bk}$ is a central configuration or CC of the $\bk$-subsystem, if there is a constant $\lmd >0$, such that 
$$ \nabla_{i} U_{\bk}(q_{\bk}) + \lmd m_i (q_i) = 0, \; \forall i \in \bk.$$
We denote the set of all CCs of the $\bk$-subsystem by $\mathscr{C}_{\bk}$ and the subset of normalized CCs as 
$$ \hat{\mathscr{C}}_{\bk} = \mathscr{C}_{\bk} \cap S_{\bk}. $$
\end{definition}
\begin{remark}
	$\hat{\mathscr{C}}_{\bk}$ is precisely the set of critical points of partial potential $U_{\bk}$ restricted on $S_{\bk}$. 
\end{remark}

\begin{proposition}
\label{prop;para-energy} If $q(t)$ is a $\bk$-parabolic solution, when $t \to \infty$,
\begin{enumerate}
\item[(a).]  $|h_{\bk}(t)| = O(t^{-5/3})$;
\item[(b).]  the normalized relative configuration $\hat{q}^c_{\bk}(t)$ must approach $\hat{\mathscr{C}}_{\bk}$. 
\end{enumerate}
\end{proposition}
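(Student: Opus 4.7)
The plan is to prove (a) by extracting extra decay from $\dot h_{\bk}$ via a tidal cancellation, and then deduce (b) from a Sundman-type energy identity on the shape sphere $S_{\bk}$, following the McGehee-style analysis of \cite{MM25}.

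For (a), I would first compute $\dot h_{\bk}$. Using $m_i\ddot q_i=\nabla_i U$, the translation invariance $\sum_{i\in\bk}\nabla_i U_{\bk}=0$, and the momentum identity $\sum_{i\in\bk} m_i(\dot q_i-\dot c_{\bk})=0$, one obtains
\[
\dot h_{\bk}=\sum_{i\in\bk}(\dot q_i-\dot c_{\bk})\cdot\nabla_i U_{\bk,\bk'}=\sum_{i\in\bk}(\dot q_i-\dot c_{\bk})\cdot F_i^{\mathrm{tid}},
\]
where $F_i^{\mathrm{tid}}:=\nabla_i U_{\bk,\bk'}-\tfrac{m_i}{m_{\bk}}\sum_{l\in\bk}\nabla_l U_{\bk,\bk'}$; the second equality uses the momentum identity to subtract an $i$-independent average force. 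A first-order Taylor expansion of each Newtonian two-body force $(q_j-q_i)/|q_j-q_i|^3$ (with $j\in\bk'$) in the small displacement $q_i-c_{\bk}=O(t^{2/3})$ about zero shows the $i$-independent leading piece cancels in $F_i^{\mathrm{tid}}$, leaving $|F_i^{\mathrm{tid}}|=O(t^{2/3}/t^{3})=O(t^{-7/3})$. For the velocity factor, Theorem \ref{thm;March-Saari} forces all $A_i$ with $i\in\bk$ to coincide (otherwise mutual $\bk$-distances would grow linearly); the sharp Marchal--Saari estimate, bootstrapped from the crude bound $|\dot q_i-\dot c_{\bk}|=O(1)$, then gives $|\dot q_i-\dot c_{\bk}|=O(t^{-1/3})$. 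Hence $|\dot h_{\bk}(t)|=O(t^{-8/3})$, which is integrable, so $h_{\bk}(t)$ converges to some limit $h^*$.

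To show $h^*=0$, I would invoke the $\bk$-subsystem Lagrange--Jacobi identity
\[
\ddot I_{\bk}=4h_{\bk}+2U_{\bk}+2\sum_{i\in\bk}(q_i-c_{\bk})\cdot\nabla_i U_{\bk,\bk'},
\]
whose interaction sum is $O(t^{-4/3})$ and whose $U_{\bk}$ term is $O(t^{-2/3})\to 0$. Taking $t\to\infty$ gives $\ddot I_{\bk}\to 4h^*$, but $I_{\bk}\asymp t^{4/3}$ precludes $\ddot I_{\bk}$ from approaching any nonzero constant, forcing $h^*=0$. Integrating $|\dot h_{\bk}(s)|\le Cs^{-8/3}$ from $t$ to $\infty$ then yields $|h_{\bk}(t)|=O(t^{-5/3})$. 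The delicate step is the tidal bound $|F_i^{\mathrm{tid}}|=O(t^{-7/3})$: one must verify, using the quantitative separation $r_{ij}\ge C_3 t$, that even after cancellation the first-order Taylor term decays at the rate $t^{-7/3}$, improving on the naive $O(t^{-2})$ bound available for each $\nabla_i U_{\bk,\bk'}$ individually.

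For (b), I would combine (a) with the Sundman-type orthogonal decomposition
\[
\sum_{i\in\bk}m_i|\dot q_i-\dot c_{\bk}|^2=\dot\rho^2+\rho^2\sum_{i\in\bk}m_i|\dot{\hat q}^c_i|^2,\qquad \rho=\sqrt{I_{\bk}},
\]
together with the homogeneity $U_{\bk}=\rho^{-1}U_{\bk}(\hat q^c_{\bk})$, to rewrite the subsystem energy as
\[
\rho\dot\rho^2+\rho^3\sum_{i\in\bk}m_i|\dot{\hat q}^c_i|^2=2\rho h_{\bk}+2U_{\bk}(\hat q^c_{\bk}),
\]
with $\rho h_{\bk}=O(t^{-1})\to 0$ by (a). The lower bound $r_{ij}\ge C_1t^{2/3}$ keeps $\hat q^c_{\bk}(t)$ in a compact, collision-free subset of $S_{\bk}$. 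Following the McGehee blow-up used in \cite{MM25}, rescaling time by $d\tau=dt/\rho^{3/2}$ turns the equations for $(\hat q^c_{\bk},\rho,\dot\rho)$ into an asymptotically autonomous perturbation of a gradient-like flow on $S_{\bk}$ whose rest points are exactly the critical points of $U_{\bk}|_{S_{\bk}}$, namely $\hat{\mathscr{C}}_{\bk}$. A standard LaSalle/$\omega$-limit argument then shows that every limit point of $\hat q^c_{\bk}(t)$ lies in $\hat{\mathscr{C}}_{\bk}$, completing (b).
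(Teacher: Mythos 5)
Your part (a) is, in essence, the paper's own argument: your tidal force $F_i^{\mathrm{tid}}=\nabla_iU_{\bk,\bk'}-\tfrac{m_i}{m_{\bk}}\sum_{l\in\bk}\nabla_lU_{\bk,\bk'}$ is exactly the remainder $\gamma_i$ in the paper's Lemma \ref{lem;mut-dis}, and the Taylor-expansion cancellation giving $|F_i^{\mathrm{tid}}|=O(t^{-7/3})$ is the same computation. The one genuine gap is your justification of the velocity bound $|\dot q_i-\dot c_{\bk}|=O(t^{-1/3})$. You derive it from Theorem \ref{thm;March-Saari}, but that theorem is a dichotomy: if $R(t)/t\to\infty$ there are no vectors $A_i$ at all, and Remark 1.3 explicitly states that the definition of a $\bk$-parabolic solution does \emph{not} assume $R(t)=O(t)$ --- indeed the whole reason the paper supplies its own proof rather than quoting Saari/Marchal--Saari is to avoid that global hypothesis. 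So the Marchal--Saari asymptotics are simply not available here, and without the $O(t^{-1/3})$ velocity bound you only get $\dot h_{\bk}=O(t^{-7/3})$ and hence $|h_{\bk}|=O(t^{-4/3})$, short of the claimed rate. The fix is local and elementary: from Definition \ref{df;k-para} one has $z_i=O(t^{2/3})$, and from your own tidal estimate $\ddot z_i=m_i^{-1}\nabla_iU_{\bk}+O(t^{-7/3})=O(t^{-4/3})$; a Landau-type interpolation on intervals $[t,2t]$ then yields $\dot z_i=O(t^{-1/3})$ with no reference to the far bodies. (Your Lagrange--Jacobi detour to get $h^*=0$ is correct but unnecessary once you have $\dot z_i\to0$, since then $K_{\bk}\to0$ and $U_{\bk}\to0$ give $h^*=0$ directly, which is how the paper concludes.)

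For part (b) you take a genuinely different route: the paper simply invokes \cite[Corollary 4]{MS76} once (a) is in hand, whereas you sketch a self-contained McGehee blow-up plus LaSalle argument. That route is viable and is essentially the mechanism behind Section 2 of the paper, but as written it is only a sketch: to run LaSalle on the asymptotically autonomous blown-up system you must exhibit the Lyapunov function explicitly --- here $v=\sqrt{r}\,\dot r$, whose derivative on the rescaled flow equals $\tfrac12F(s,w)+u^{-2}h_{\bk}+O(u^2)$ and is therefore eventually nonnegative by (a) --- and verify boundedness of $v$ and precompactness of the shape trajectory (the latter does follow from $C_1t^{2/3}\le r_{ij}\le C_2t^{2/3}$). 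If you intend (b) to be independent of \cite{MS76}, those steps need to be written out; otherwise the citation is the shorter path.
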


\begin{remark}
Under the assumption $R(t) = O(t)$, as $t \to \infty$, this was already proven in \cite{Sr71} and \cite{MS76}. Here as we do not make such an assumption, a proof will be given in Section \ref{sec:para-energy}. 
\end{remark}

Let $\om(\hat{q}^c_{\bk}(t))$ denote the $\om$-limit set of $\hat{q}^c_{\bk}(t)$. Proposition \ref{prop;para-energy} implies 
$$ \om(\hat{q}^c_{\bk}(t)) \subset \hat{\scc}_{\bk}.$$
Then naturally we can ask the following question:

\emph{Does the $\om$-limit set contain a point, or equivalent does $\hat{q}^c_{\bk}(t)$ converge to a particular CC, as $t \to \infty$? } 

Due to the rotational symmetry, if $q_{\bk} \in \hat{\mathscr{C}}_{\bk}$, so is $R(\tht) q_{\bk}= (R(\tht) q_i)_{i \in \bk}$, for any rotation $R(\tht) \in SO(2)$. Consider the $(2|\bk|-4)$-dim quotient manifold $S_{\bk}/SO(2) \simeq \cc \mathbb{P}(|\bk|-2)$. We write the quotient map 
$$ S_{\bk} \to  \cc \mathbb{P}(|\bk|-2); \; q_{\bk} \mapsto [q_{\bk}]. $$
Let the function on $\cc \pp (|\bk|- 2)$ induced by $U_{\bk}$ be $[q_{\bk}] \mapsto U_{\bk}([q_{\bk}])$. We say $q_{\bk}$ is an isolated (resp. nondegenerate or degenerate) CC, if $[q_{\bk}]$ is an isolated (resp. nondegenerate or degenerate) critical point of $U([q_{\bk}])$.

Following the above notation, if we assume $[\hat{q}^c_{\bk}(t)]$ converges to a single isolated CC, as $t \to \infty$, the $\om$-limit set of $q^c_{\bk}(t)$ could be a subset of the circle generated by the corresponding CC. In particular it may contain the entire circle as $q^c_{\bk}(t)$ could go around infinitely many times. This is known as the \emph{infinite spin problem} (see \cite{Wt41}). 

To justify the assumption that $[\hat{q}^c_{\bk}(t)]$ converges to an isolated CC. We recall the following well-known conjecture for CC, which is listed by Smale \cite{Sm98} as problem 6 in his list of problems for the 21st century : 

\emph{For the planar $n$-body with arbitrary choice of mass, there are only finitely many normalized CCs up to the rotational symmetry.}

It is not hard to show if the above conjecture is true, every CC must be isolated. Although it is widely believed the conjecture is true, so far results are only available for small $n$. For $n=3$, it is a classical result due to Euler and Lagrange, for $n=4$, it was proven by Hampton and Moeckel \cite{HM06}, and for $n=5$, it was proven by Albouy and Kaloshin \cite{AK12} for generic choice of mass. For $n=6$, there are some partial progresses made by Chang and Chen \cite{CC24}. When all masses are equal and $n \le 7$, related results can be found in \cite{MZ19}. 

Here we show there is no infinite spin for a parabolic solution. 
\begin{theorem}
	\label{them;no-spin-para} If $q(t)$ is a $\bk$-parabolic solution with the reduced and normalized relative configuration $[q^c_{\bk}(t)] \in S_{\bk}/SO(2)$ converging to an isolated CC, then the normalized relative configuration $q^c_{\bk}(t)$ converges to a particular CC in $\mathscr{C}_{\bk}$, and in particular there is no infinite spin. 
\end{theorem}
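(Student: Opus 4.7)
The plan is to follow the Moeckel--Montgomery strategy of \cite{MM25}, adapted to the regime $t\to\infty$ rather than to total collision; the advantage in the parabolic case is that the moment of inertia $\rho(t)^2:=I_{\bk}(q^c_{\bk}(t))$ grows like $t^{4/3}$ rather than shrinks, so the final integrability becomes essentially automatic. The hypothesis already provides convergence of the reduced configuration $[\hat q^c_{\bk}(t)]\to[s^*]$, so all that remains is to lift this to convergence on $S_{\bk}$, i.e.\ to show that the angular position of $\hat q^c_{\bk}(t)$ along the $SO(2)$-orbit $\mathcal O:=SO(2)\cdot s^*$ has a limit.

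I would first set up the local geometry near $\mathcal O$. Since the $SO(2)$-action on $S_{\bk}$ is free (every normalized configuration has some particle away from the center of mass), $\mathcal O$ is a smoothly embedded circle, and the hypothesis forces $\hat q^c_{\bk}(t)$ into arbitrarily small tubes around $\mathcal O$ for $t$ large. Let $\sigma(t)=R(\theta(t))\,s^*\in\mathcal O$ be the orthogonal projection of $\hat q^c_{\bk}(t)$ in the mass inner product; this selects a continuous angle $\theta(t)$, and the problem reduces to showing that $\theta(t)$ has a limit. A direct computation in complex coordinates yields the identities $\langle\dot{\hat q}^c_{\bk},X_{\hat q^c_{\bk}}\rangle=-J_{\bk}/\rho^2$ and $\dot\theta\bigl(1-\tfrac{1}{2}|\hat q^c_{\bk}-\sigma|^2\bigr)=\langle\dot{\hat q}^c_{\bk},X_{\sigma}\rangle$, where $X$ is the infinitesimal $SO(2)$-generator and $J_{\bk}$ is the internal angular momentum of the $\bk$-subsystem. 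Combining these gives
$$
|\dot\theta(t)|\le (1+o(1))\,\frac{|J_{\bk}(t)|}{\rho(t)^2}+C\,|\dot{\hat q}^c_{\bk}(t)|\cdot|\hat q^c_{\bk}(t)-\sigma(t)|.
$$

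The first term on the right is controlled by a torque estimate: since the internal torque of the subsystem vanishes, one has $\dot J_{\bk}=\sum_{i\in\bk,\,j\in\bk'}m_im_j\,(q_i-c_{\bk})\times(q_j-q_i)/r_{ij}^3$, and using $|q_i-c_{\bk}|=O(t^{2/3})$ together with $r_{ij}\ge C_3 t$ (both from the $\bk$-parabolic definition), we get $|\dot J_{\bk}|=O(t^{-4/3})\in L^1$. Hence $J_{\bk}(t)$ has a finite limit and $|J_{\bk}|/\rho^2=O(t^{-4/3})$ is integrable at infinity. The main obstacle is the second term: a priori only $|\dot{\hat q}^c_{\bk}|=O(t^{-1})$ (from $|\dot q^c_{\bk}|=O(t^{-1/3})$ and $\rho=O(t^{2/3})$) and $|\hat q^c_{\bk}-\sigma|\to 0$ without a rate, so a direct integration is borderline insufficient.

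To close this gap I would invoke the Moeckel--Montgomery machinery: pass to McGehee-type blown-up coordinates with new time $\tau=\int\rho^{-3/2}\,dt\sim\log t$, in which the reduced dynamics on $S_{\bk}/SO(2)$ is asymptotic to a gradient-like flow for the real-analytic function $U_{\bk}$, perturbed by terms coming from $U_{\bk,\bk'}$ and $h_{\bk}$, both of which decay by Proposition~\ref{prop;para-energy}. The Łojasiewicz--Simon inequality at the isolated critical point $[s^*]$ then yields the finite-length estimate $\int_0^\infty|d[\hat q^c_{\bk}]/d\tau|\,d\tau<\infty$; a bootstrap on the rescaled form of the inequality above then shows $d\theta/d\tau\in L^1(d\tau)$, hence $\theta(\tau)\to\theta^*$ and $\hat q^c_{\bk}(t)\to R(\theta^*)s^*\in\hat{\scc}_{\bk}$. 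The most delicate step will be verifying that the gradient-like structure required for the Łojasiewicz argument genuinely persists under the non-autonomous perturbations from the far particles and the residual energy; compared with the total collision case of \cite{MM25}, the parabolic decay rates are more favorable, since $1/\rho^2$ is integrable at infinity (whereas it fails to be integrable near a collision time $T$).
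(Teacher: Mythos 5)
Your strategy is the intended one (it is explicitly the Moeckel--Montgomery route the paper follows), and the first half is sound: your torque estimate giving $|J_{\bk}|/I_{\bk}=O(t^{-4/3})\in L^1$ is the paper's bound $|\dot\theta|\le |\mu|/r^2+C\|\omega\|_{FS}$ from \eqref{eq;tht-dot} together with Corollary~\ref{cor;mu-P-Q-bound}, and reducing everything to finiteness of the Fubini--Study arclength of the reduced curve is correct. The gap is at the decisive step, in the sentence claiming that in the blown-up time $\tau$ ``the reduced dynamics on $S_{\bk}/SO(2)$ is asymptotic to a gradient-like flow for $U_{\bk}$,'' to which you then apply \L ojasiewicz. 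That statement is false as written: the blown-up reduced dynamics is second order (its phase space carries $s$ \emph{and} $w=s'$), the equilibrium $(0,v_0,s_0,0)$ is in general non-hyperbolic with eigenvalues $\lambda_{\pm}=(-v_0\pm\sqrt{v_0^2+16c})/4$, and the flow near it is not gradient-like. One must first show the actual orbit is exponentially shadowed by an orbit on the invariant manifold $\{u=0\}$ (Proposition~\ref{prop;app-sol}), then shadow that orbit by one on the local center manifold of the degenerate equilibrium, and only the flow \emph{on the center manifold} takes the form $x'=k\tilde{\nabla}W(x)+o(|\tilde{\nabla}W(x)|)$ for the restricted potential $W(x)=V_{\bk}(x,f(x))$, to which the \L ojasiewicz argument applies (Lemmas~\ref{lem;flow-cen-manf}--\ref{lem;grd-flow}). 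Applying \L ojasiewicz to $U_{\bk}$ directly on the quotient, without the two shadowing reductions, is precisely the incompleteness in the earlier literature that \cite{MM25} was written to repair, so this cannot be waved through as ``machinery.''

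The second, independent gap concerns partial parabolic solutions ($|\bk|<n$): there the blown-up system \eqref{eq;res-eq} is genuinely non-autonomous because of the terms $u^2P(\tau)$, $u^2Q(\tau)$ coming from $U_{\bk,\bk'}$, so the first shadowing step cannot be carried out with the standard autonomous center-manifold theorem. You correctly flag this as ``the most delicate step'' but supply no argument for it. The paper's resolution is Theorem~\ref{thm;cen-mfd-time-dep}, a time-dependent shadowing theorem proved by a contraction mapping in the exponentially weighted space $Z_\eta$; it applies here because $u(\tau)\le e^{-v_0\tau/4}$ and $P,Q$ are bounded, so the perturbation decays like $e^{-\alpha\tau}$ for every $\alpha\in(0,\beta)$. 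Without this (or an equivalent non-autonomous reduction) your argument closes only in the complete parabolic case $\bk=\bn$, where $U_{\bk,\bk'}\equiv 0$ and the system is autonomous. Relatedly, the opening claim that ``the final integrability becomes essentially automatic'' because $1/I_{\bk}$ is integrable at infinity is misleading: that handles only the angular-momentum term, not the arclength term, which is where all the work lies.
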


Our proof is based on a recent result by Moeckel and Montgomery \cite{MM25}, where the authors proved there is no infinite spin for a total collision solution in the planar $n$-body problem under a similar assumption. There are some well-known similarities between collision and parabolic solutions, which we shall explain below. 

\begin{definition}
	\label{df;par-coll} We say a solution $q: (0, T) \to \be$, is $\bk$-collision, for a $\bk \subset \bn$ with $|\bk| \ge 2$, if $\lim_{t \to T} q(t) = q^* \in \be$ with 
	$$ \begin{cases}
		q^*_i = q^*_j, & \forall i \ne j \in \bk; \\
		q^*_i \ne q^*_j, & \forall i \in \bk, j \in \bk'. 
	\end{cases}
     $$
\end{definition}
\begin{remark}
	A $\bk$-collision solution is partially collision, if $|\bk| < n$, and total collision, if $|\bk|= n$. Notice that there may be more than one collision occurring at the same time. 
\end{remark}

Given a $\bk$-collision solution $q(t)$, when the masses in the $\bk$-subsystem approach the collision, it is well-known (see \cite{Chazy18}, \cite{Wt41} or \cite{FT04}), the normalized relative configuration $\hat{q}^c_{\bk}(t)$ of the $\bk$-subsystem must also approach the set of normalized CCs, as $t \to T$. If we assume $[\hat{q}^c_{\bk}(t)]$ converges to a single isolated CC, the problem of whether $\hat{q}^c_{\bk}(t)$ converges to a particular CC also occurs, and so is the problem of infinite spin. 

Since the work of Chazy \cite{Chazy18}, many people had tried to solve the problem of infinite spin for collision solutions. Several published papers had claimed to solve this problem, but all turned out to be incomplete. In \cite{MM25}, Moeckel and Montgomery explained in details why all these proofs are not complete. 

For a total collision solution in the planar $n$-body problem, first in \cite{Yu24}, Yu showed there is no infinite spin, when the center manifold is one dimensional and two dimensional in some cases. Finally use \L ojasiewicz inequality \cite{Lj82}, Moeckel and Montgomery \cite{MM25} proved there is no infinite spin stated as in the following theorem. 
\begin{theorem}{\cite[Theorem 1.2]{MM25}}
	\label{thm;total-coll:label} Suppose $q(t)$ is a total collision of the planar $n$-body problem with $q(t) \to 0$, as $t \to T$ and suppose the corresponding reduced and normalized configuration $[\hat{q}(t)] \in S_{\bn}/ SO(2)$ converges to an isolated CC. Then the normalized configuration $\hat{q}(t)$ converges to a particular CC in $\scc_{\bn}$, and there is no infinite spin. 
\end{theorem}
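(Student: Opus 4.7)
The plan is to follow the blow-up plus {\L}ojasiewicz strategy of \cite{MM25}. First I would pass to McGehee-type coordinates for the total collision: write $q = r\,\hat q$ with $r = \sqrt{I(q)}$ and $\hat q\in S_{\bn}$, introduce a rescaled conjugate velocity, and reparametrize time by $d\tau = r^{-3/2}\,dt$. In the blown-up phase space the collision set $\{r=0\}$ is an invariant real-analytic manifold, the vector field extends analytically across it, and the hypothesis $q(t)\to 0$ as $t\to T$ corresponds to $\tau\to +\infty$. By Sundman's classical theorem a total collision forces the total angular momentum to vanish, $J\equiv 0$, and this conservation law transfers unchanged to the blown-up system.

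Next I would analyze the shape dynamics on the reduced space $S_{\bn}/SO(2)\simeq \cc\pp(n-2)$, where the induced potential $\tilde U$ is real analytic. By hypothesis $[\hat q(\tau)]$ accumulates at a single isolated critical point $c_\ast$ of $\tilde U$, so the {\L}ojasiewicz gradient inequality yields constants $C>0$ and $\mu\in[\tfrac12,1)$ with
\begin{equation*}
|\tilde U(x)-\tilde U(c_\ast)|^{\mu}\le C\,\|\nabla \tilde U(x)\|
\end{equation*}
near $c_\ast$. The projected shape flow in $\tau$ is not purely gradient, but it is gradient-dominated up to error terms controlled by the rescaled kinetic variables, which remain bounded and are themselves damped along the orbit. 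Combining the {\L}ojasiewicz inequality with a Lyapunov function $V$ built from $\tilde U$ and the blown-up kinetic energy, as implemented in \cite{MM25}, gives a first-order estimate of the form $-\tfrac{d}{d\tau}(V-V_\ast)^{1-\mu}\ge c\,\bigl|\tfrac{d}{d\tau}[\hat q(\tau)]\bigr|$ on a neighborhood of $c_\ast$. Integrating from some $\tau_0$ to $+\infty$ yields rectifiability:
\begin{equation*}
\int_{\tau_0}^{\infty}\Bigl|\tfrac{d}{d\tau}[\hat q(\tau)]\Bigr|\,d\tau<\infty,
\end{equation*}
so the reduced orbit converges to $c_\ast$ along a curve of finite length.

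Finally I would lift this base curve to $S_{\bn}$ to control the rotation angle. Pick a real-analytic local section $\sigma: U\subset\cc\pp(n-2)\to S_{\bn}$ with $\sigma(c_\ast)=\hat q_\ast$, and for large $\tau$ write $\hat q(\tau)=R(\theta(\tau))\,\sigma(x(\tau))$, where $x(\tau)\to c_\ast$. A short computation in the mass metric shows that the mechanical connection $1$-form $A$ on the principal $SO(2)$-bundle $S_{\bn}\to \cc\pp(n-2)$ is smooth on $U$, and the identity $J\equiv 0$ reduces to
\begin{equation*}
\frac{d\theta}{d\tau}+A\bigl(x(\tau)\bigr)\cdot\frac{dx}{d\tau}=0.
\end{equation*}
Since $\|A\|$ is bounded on a neighborhood of $c_\ast$ and $\int|dx/d\tau|\,d\tau<\infty$ by the previous step, we obtain $\int|\theta'(\tau)|\,d\tau<\infty$, so $\theta(\tau)\to\theta_\ast$ for some $\theta_\ast\in\rr$. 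Therefore $\hat q(\tau)\to R(\theta_\ast)\,\hat q_\ast\in\scc_{\bn}$, the claimed convergence to a specific central configuration, and infinite spin is excluded.

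The hardest step is the second paragraph: promoting the {\L}ojasiewicz inequality for the scalar function $\tilde U$ on $\cc\pp(n-2)$ to a rectifiability estimate for the full blown-up vector field, which is second order and coupled to the radial and gauge velocity variables. One must construct a Lyapunov function $V$ whose decay genuinely controls $|(d/d\tau)[\hat q]|$ up to a {\L}ojasiewicz exponent, and verify this near a merely isolated (possibly degenerate) critical point rather than a Morse one, precluding any quick spectral or normal-form shortcut. This is the analytic heart of \cite{MM25}, which my proposal would invoke as the main technical black box.
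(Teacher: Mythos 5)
Your outer skeleton agrees with the paper's (i.e.\ with \cite{MM25}): McGehee blow-up with $d\tau=r^{-3/2}dt$, convergence to a restpoint $(0,v_0,s_0,0)$ on the collision manifold $\{r=0\}$, a \L ojasiewicz-based finite-arclength estimate for the shape curve, and then control of the rotation angle --- your mechanical-connection identity with $J\equiv 0$ is equivalent to the paper's bound $|\dot\theta|\le |\mu|/r^2+C\|\omega\|_{FS}$ with $\mu\equiv0$ by Sundman. The problem is your second paragraph, which you yourself flag as the analytic heart and then defer to \cite{MM25} as a ``black box.'' That is doubly defective: first, since the statement being proved \emph{is} \cite[Theorem 1.2]{MM25}, invoking its main technical content as a black box is circular; second, what you describe is not what \cite{MM25} (or this paper's Section 3 analogue) actually does. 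There is no Lyapunov function ``built from $\tilde U$ and the blown-up kinetic energy'' yielding a descent inequality $-\frac{d}{d\tau}(V-V_*)^{1-\mu}\ge c\,|\frac{d}{d\tau}[\hat q]|$ on the \emph{full} blown-up system, and such an inequality cannot hold there: at a total-collision restpoint $v_0=-\sqrt{2V_{\bn}(s_0)}<0$, so the shape-velocity equation $w'=-\frac{vw}{2}+\tilde\nabla V_{\bn}(s)+\cdots$ is \emph{anti}-damped ($-v_0/2>0$), and the linearization has genuinely unstable eigenvalues $\lambda_+=\frac{-v_0+\sqrt{v_0^2+16c}}{4}>0$. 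So a damped-second-order argument in the style of Haraux--Jendoubi is unavailable, and a monotone Lyapunov estimate valid for all nearby orbits is impossible; the orbit converges only because it lies in the center-stable set. This is precisely the point at which the historical ``proofs'' criticized in \cite{MM25} broke down.

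The missing idea is the center-manifold reduction. In the nondegenerate case the restpoint is hyperbolic, convergence along the stable manifold is exponential, and finite arclength is immediate --- no \L ojasiewicz needed. In the degenerate case one first produces, by the (here: Bressan-style) center manifold theorem, a solution $\tilde\gamma(\tau)$ lying in the local center manifold --- which is contained in $\{r=0\}\cap\{\frac{v^2}{2}+\frac12 F(s,w)-V_{\bn}(s)=0\}$ --- that shadows the given orbit exponentially, $|\gamma(\tau)-\tilde\gamma(\tau)|\le Ce^{-\eta\tau}$; this removes all hyperbolic directions. Only on the center manifold does the dynamics become a first-order, genuinely gradient-like system $x'=k\tilde\nabla W(x)+\gamma(x)$ with $\gamma(x)=o(|\tilde\nabla W(x)|)$ (Lemma \ref{lem;flow-cen-manf}; Lemma 4.3 of \cite{MM25}), to which the \L ojasiewicz inequality $|\tilde\nabla W(x)|^2\ge |W(x)-W(0)|^{\al}$, $1<\al<2$, applies to give finite arclength (Lemmas \ref{lem;grd-ineq}--\ref{lem;grd-flow}); finiteness then transfers to the original orbit through the exponential shadowing estimate. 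Until you supply this reduction (or a genuine substitute that handles the unstable directions and the anti-damping), your rectifiability step is a gap, not a proof; the first and third paragraphs of your proposal are fine.
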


\begin{remark}
The problem of infinite spin for collision solutions is interesting purely from a mathematical point of view, as it only make sense when the bodies are point masses. Meanwhile for parabolic solutions it may have some practical meaning, as it still make sense when the bodies are not point masses. 
\end{remark}

The story for the problem of infinite spin of parabolic solutions is more or less the same, where it was claimed to be solved for complete parabolic solution in \cite{SH81} and \cite{Sr84}. However the proofs are incomplete for the same reason explained by Moeckel and Montgomery in \cite{MM25}. 

Our main contributions are the following two: 

First, we find a modification of the McGehee coordinate that is suitable for the study of parabolic solution, see \eqref{eq;McGehee-para}. As is well-known the McGehee coordinate was first introduced in \cite{Mg74} to study triple collision solutions in the collinear three body problem, but also works for collision solutions in general $n$-problem, see \cite{Mk89} and \cite{Eb90}.

Second, compare to \cite{MM25}, where only total collision solutions are considered,  here we not only consider complete parabolic solutions, but also partially parabolic solutions. In the later case, the problem becomes time dependent, due to the effect of the masses out of the subsystem. To overcome this, we prove the following theorem, which seems to be interesting by itself.   

Consider a smooth time-dependent system  
    \begin{equation}
    \label{eq;nonauto}
        \dot{x}=f(x)+g(x,t), \; x \in \rr^d,
    \end{equation}
    with $g(x,t)$ bounded for every $x\in\mathbb{R}^d$ and $g(x,t)=O(|x|^2)$ uniformly, for $|x|$ small enough. Define the spectral gap of $Df(0)$ as 
    \begin{align*}
    \beta:=\min\{|\text{re}(\lambda)|:\lambda \text{ is\ an eigenvalue\ of}\ Df(0)\ \text{with\ non-zero\ real\ part} \}. 
\end{align*}

\begin{theorem} \label{thm;cen-mfd-time-dep}
   Assume $\mathcal{N} \subset \rr^d$ is an invariant submanifold of  \eqref{eq;nonauto} with $g|_{\mathcal{N}}\equiv0$ and $0 \in \mathcal{N}$ being an equilibrium point. If a solution $x(t)$ of \eqref{eq;nonauto} satisfies
    \begin{equation}
    \label{eq;con-nonauto}
        x(t)\rightarrow0, \text{ as } t \to \infty, \text{ and } e^{\alpha t}|g\big(x(t),t\big)|<\infty, \; \forall t \text{ large enough},
    \end{equation}
    for every $\alpha\in (0,\beta)$,
    then there is a solution $y(t)$ in $\mathcal{N}$ and constants $C>0$, $\eta>0$, such that
    \begin{align*}
        |y(t)-x(t)|\le Ce^{-\eta t}, \; \forall t >0. 
    \end{align*} 
\end{theorem}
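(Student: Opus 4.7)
The plan is to straighten $\mathcal{N}$ near $0$, reduce the claim to an exponential shadowing statement for a solution approaching an equilibrium, and construct the shadow via a Lyapunov--Perron-type fixed-point equation whose forcing decays faster than any rate below $\beta$.

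First I would exploit the invariance of $\mathcal{N}$: since $g|_{\mathcal{N}}\equiv 0$, $\mathcal{N}$ is also invariant for the autonomous field $f$, so $V:=T_0\mathcal{N}$ is $Df(0)$-invariant. A smooth local change of coordinates flattens $\mathcal{N}$ to an open neighborhood of $0$ in $V$; choosing a complement $W$ and writing $x=(v,w)\in V\oplus W$, the system takes the block form
\begin{align*}
\dot v &= Av + F_V(v,w) + G_V(v,w,t),\\
\dot w &= Cw + F_W(v,w) + G_W(v,w,t),
\end{align*}
with $A=Df(0)|_V$, $C$ the induced action of $Df(0)$ on $W$, $F_W(v,0)\equiv 0$, and $G_V(v,0,t)\equiv G_W(v,0,t)\equiv 0$. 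Combined with $g=O(|x|^2)$, the last identity gives $G_V,G_W=O(|x|\,|w|)$ for small $|x|$. Since $\mathrm{spec}(Df(0))=\mathrm{spec}(A)\cup\mathrm{spec}(C)$, the gap $\beta$ governs both blocks.

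Next I would seek the shadow as $y(t)=(v^*(t),0)\in\mathcal{N}$, where $v^*$ solves the reduced autonomous equation $\dot v^* = Av^* + F_V(v^*,0)$. Introducing the deviation $z=(u,w)$ with $u=v-v^*$, subtraction yields
$$ \dot z = Df(0)\,z + R(z,v^*(t)) + \phi(t), $$
where $R=O(|z|^2+|z|\,|v^*|)$ collects the nonlinearities and $\phi(t):=g(x(t),t)$ is the known forcing, which by hypothesis satisfies $|\phi(t)|=O(e^{-\alpha t})$ for every $\alpha\in(0,\beta)$. Decompose $\mathbb{R}^d=E^s\oplus E^c\oplus E^u$ into the stable, center, and unstable spectral subspaces of $Df(0)$ with projectors $P^s,P^c,P^u$, fix $\eta\in(0,\beta)$, and recast the deviation ODE as the Lyapunov--Perron integral equation
$$ z(t) = \int_{t_0}^t e^{(t-s)Df(0)}P^s\bigl(R+\phi\bigr)(s)\,ds - \int_t^\infty e^{(t-s)Df(0)}(P^c+P^u)\bigl(R+\phi\bigr)(s)\,ds, $$
coupled to the $v^*$-equation with $v^*(t_0)$ treated as an unknown fixed by consistency. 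Working in the Banach space $X_\eta$ with norm $\|z\|_\eta=\sup_{t\ge t_0}e^{\eta t}|z(t)|$, picking $\alpha\in(\eta,\beta)$ places $\phi\in X_\eta$, and taking $t_0$ large makes $|x(t)|$ and hence $|v^*(t)|$ small, so $R$ becomes a small Lipschitz perturbation on a small ball of $X_\eta$. A standard Banach contraction argument then yields a unique fixed point $z$ with $|z(t)|\le Ce^{-\eta t}$, and the associated $y(t)=(v^*(t),0)$ is the sought solution in $\mathcal{N}$.

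The main obstacle I anticipate is the handling of the center subspace $E^c$: its non-contracting modes force backward integration from $+\infty$ on $(P^c+P^u)(R+\phi)$, and convergence of this improper integral in $X_\eta$ is precisely why $\phi$ must decay faster than every $e^{-\alpha t}$ with $\alpha<\beta$, matching the hypothesis exactly rather than only a single-rate assumption. Secondary technicalities that still need to be checked are that the straightening of $\mathcal{N}$ covers the tail of $x(t)$ (immediate from $x(t)\to 0$) and that the reduced flow on $\mathcal{N}$ is genuinely autonomous (consequence of $g|_{\mathcal{N}}\equiv 0$), so that the fixed-point output $y(t)$ is a bona fide orbit of \eqref{eq;nonauto} lying in $\mathcal{N}$.
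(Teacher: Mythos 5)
Your analytic core---spectral splitting of $Df(0)$, a Lyapunov--Perron integral equation with the stable part integrated forward and the center--unstable part integrated backward from $+\infty$, and a contraction in a space with norm $\sup_t e^{\eta t}|z(t)|$---is exactly the paper's mechanism (the paper additionally extends $x$ to all of $\mathbb{R}$ and cuts off the nonlinearity $h=f-Df(0)x$ so that it is globally small-Lipschitz, rather than restricting to $t\ge t_0$). The genuinely different step is your reduction: you straighten $\mathcal{N}$ and posit the shadow in the explicit form $y=(v^*,0)$, whereas the paper solves directly for $y=x^*+z$ with no a priori form.

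That reduction is where the gap is. With $y=(v^*(t),0)$, the deviation $z=x-y$ has $W$-component equal to $w(t)$, the transverse component of the \emph{given} solution $x(t)$; it is data, not an unknown. Your fixed-point equation determines all $d$ components of $z$ (in particular it forces $P^s z(t_0)=0$ and exponential decay of every component), so nothing makes the $W$-part of the fixed point coincide with $w(t)$. Equivalently, if you solve only for $v^*$ and move the $w$-dependent terms into the forcing, you need $|w(t)|\le Ce^{-\eta t}$ both for that forcing to lie in $X_\eta$ and for the final estimate $|x-y|\le Ce^{-\eta t}$. This exponential transverse decay is neither among the hypotheses nor established in your sketch, and at this level of generality it can fail: take $d=2$, $f(x_1,x_2)=(-x_1^3,-x_2)$, $g\equiv0$, $\mathcal{N}=\{x_1=0\}$, and the solution $x(t)=\bigl((2t)^{-1/2},0\bigr)$; all hypotheses hold with $\beta=1$, yet every curve in $\mathcal{N}$ stays at distance at least $(2t)^{-1/2}$ from $x(t)$. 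So the step cannot be repaired without invoking (or adding) the information that $\mathrm{dist}\bigl(x(t),\mathcal{N}\bigr)$ decays exponentially---which is exactly what is verified separately in the paper's applications, where the direction transverse to $\mathcal{M}=\{u=0\}$ is hyperbolic with eigenvalue $-v_0/2$ and $u(\tau)\le e^{-v_0\tau/4}$. The paper's write-up does not hit this wall in the same place because it never imposes the form $(v^*,0)$, but the same issue resurfaces there as the unargued assertion that the limit $y=x^*+z$ of the contraction lies in $\mathcal{N}$; your version at least makes the missing ingredient explicit, and you should state it as an additional hypothesis and verify it where the theorem is applied.
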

A proof will be given in Section \ref{sec:appendix} following Bressan's proof of the center manifold theorem in \cite{Br03}. Theorem \ref{thm;cen-mfd-time-dep} also allows us to generalize Moeckel and Montgomery's result to partial collision solutions, as in this case the corresponding problem is time dependent as well.   
\begin{theorem}
	\label{thm;par-coll} If $q(t)$ is a $\bk$-collision solution with the reduced and normalized relative configuration $[\hat{q}^c_{\bk}(t)]$ converging to an isolated CC, when $t \to T$, then the normalized relative configuration $q^c_{\bk}(t)$ converges to a particular CC in $\mathscr{C}_{\bk}$, and in particular there is no infinite spin. 
\end{theorem}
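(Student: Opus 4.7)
The plan is to combine the approach of \cite{MM25} with Theorem \ref{thm;cen-mfd-time-dep} to handle the time-dependence caused by the interaction between the $\bk$-subsystem and the masses in $\bk'=\bn\setminus\bk$. The starting point is a McGehee-type rescaling adapted to the $\bk$-collision: set $r=\sqrt{I_{\bk}(q^c_{\bk})}$, $s=q^c_{\bk}/r\in S_{\bk}$, rescale the momenta analogously, and introduce a new time variable via $d\tau/dt=r^{-3/2}$. Because the collision point of the $\bk$-subsystem is well separated from the positions $\{q^*_j:j\in\bk'\}$, the potential $U_{\bk,\bk'}$ and its derivatives remain smooth up to $t=T$, and a direct computation of $\ddot q^c_{\bk,i} = \ddot q_i - \ddot c_{\bk}$ shows that, after cancelling the $i$-independent contribution of $\nabla_i U_{\bk,\bk'}$ against $\ddot c_{\bk}$, the residual external force on the relative motion is $O(|q_i-c_{\bk}|)=O(r)$.

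In the rescaled variables the full system then takes the form $\dot x = f(x)+g(x,\tau)$ required by Theorem \ref{thm;cen-mfd-time-dep}, where $x=(r,\ s-s_0,\ v-v_0)$ is centered at the equilibrium $(0,s_0,v_0)$ corresponding to the limit CC, $f$ is the autonomous McGehee vector field for the isolated $\bk$-collision studied in \cite{MM25}, and $g$ encodes the external forces from $\bk'$. The $O(r)$ estimate above, together with the rescalings, produces $g(x,\tau)=O(r^k)$ with $k\ge 2$, and in particular $g$ vanishes identically on the collision manifold $\{r=0\}$, so $g(x,\tau)=O(|x|^2)$ uniformly in $\tau$. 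Next I would take $\mathcal{N}$ to be the $f$-invariant submanifold inside $\{r=0\}$ that carries the gradient-like reduced dynamics of $U_{\bk}|_{S_{\bk}}$ used in \cite{MM25}. The hypothesis that $[\hat q^c_{\bk}(t)]$ converges to an isolated CC, combined with the Sundman/Chazy asymptotics near a $\bk$-collision, yields $x(\tau)\to 0$ at an exponential rate, which in turn gives the decay bound $e^{\alpha\tau}|g(x(\tau),\tau)|<\infty$ for every $\alpha$ below the spectral gap $\beta$ of $Df(0)$.

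Applying Theorem \ref{thm;cen-mfd-time-dep} then produces a solution $y(\tau)$ of the autonomous system lying in $\mathcal{N}$ with $|x(\tau)-y(\tau)|\le Ce^{-\eta\tau}$. On $\mathcal{N}$ the argument of Moeckel and Montgomery goes through verbatim: the \L ojasiewicz inequality \cite{Lj82} forces $y(\tau)$ to converge to a single point on the $SO(2)$-orbit of the limit CC, and the exponential shadowing then transports the same conclusion to $x(\tau)$, hence to $q^c_{\bk}(t)$, ruling out infinite spin.

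The main technical obstacle is the careful verification of $g=O(|x|^2)$ in the McGehee variables. One must show that the cancellation between the external force on the $i$-th body and the corresponding term in $\ddot c_{\bk}$ removes not only the $O(1)$ piece of $\nabla_i U_{\bk,\bk'}$ but enough lower-order terms that the surviving perturbation, once combined with the $r^{-3/2}$ time rescaling and the rescaling of momenta, has order at least $r^2$ in the unified coordinate $x$, uniformly on a neighborhood of the equilibrium. A subsidiary issue is to confirm that the invariant manifold $\mathcal{N}$ supplied by \cite{MM25} is regular enough for the Bressan-style center manifold argument behind Theorem \ref{thm;cen-mfd-time-dep}, and to verify that the outer smooth motion of the $\bk'$-bodies on the physical time scale $T-t$ does not produce resonant corrections when translated into the inner McGehee time $\tau$.
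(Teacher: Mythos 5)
Your overall strategy coincides with the paper's: the same McGehee rescaling with $d\tau = r^{-3/2}dt$, the invariant manifold $\mathcal{N}=\{r=0\}$, an application of Theorem \ref{thm;cen-mfd-time-dep} to shadow the true orbit by an autonomous one on $\mathcal{N}$, and then the \L ojasiewicz argument of \cite{MM25}. However, there is a concrete gap in the step you yourself flag as the main obstacle, namely the verification that the perturbation is $O(r^2)$ in the rescaled variables. You account only for the external force $\nabla U_{\bk,\bk'}$ and its cancellation against $\ddot c_{\bk}$, which indeed gives an $O(r)$ residual force and hence (matching Lemma \ref{lem;est-col-orbit}) contributes at order $r^2$ after rescaling. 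What you omit entirely is the angular momentum $\mu$ of the $\bk$-subsystem. Unlike the total collision case of \cite{MM25}, where the total angular momentum is conserved and forced to vanish by Sundman's theorem, here $\mu$ is time-dependent ($\dot\mu = \partial U_{\bk,\bk'}/\partial\theta$) and enters the radial equation through the term $\mu^2/r^3$, which after rescaling becomes $\mu^2/r$. Sundman's estimates alone only give $\mu = O(r^{1/2})$ (from $\mu^2 \le C\, I_{\bk}K_{\bk}$ with $I_{\bk}\sim |T-t|^{4/3}$, $K_{\bk}\sim|T-t|^{-2/3}$), which would make $\mu^2/r$ merely bounded rather than $O(r^2)$, and the decay hypothesis $e^{\alpha\tau}|g(x(\tau),\tau)|<\infty$ of Theorem \ref{thm;cen-mfd-time-dep} would fail for this piece. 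The missing ingredient is the integration of the torque: $|\dot\mu| = |\partial U_{\bk,\bk'}/\partial\theta| = O(r) = O(|T-t|^{2/3})$ together with $\mu\to 0$ yields $|\mu| = O(|T-t|^{5/3}) = O(r^{5/2})$, which is what makes $\mu^2/r = O(r^4)$ and restores the required structure. This is precisely the content of Lemma \ref{lem;est-col-mu} and is the main new technical point of the partial collision case relative to \cite{MM25}; without it your claimed $g=O(|x|^2)$ is unjustified.

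A secondary, more minor imprecision: you assert that the hypothesis forces $x(\tau)\to 0$ at an exponential rate. In the degenerate case only $r(\tau)$ decays exponentially (from $r'=rv$ with $v\to v_0<0$); the convergence of $s$ to $s_0$ may be arbitrarily slow, which is exactly why the center manifold and \L ojasiewicz machinery are needed. Fortunately only the exponential decay of $r$ is used to verify the hypothesis of Theorem \ref{thm;cen-mfd-time-dep}, so this does not damage the argument, but the statement as written is too strong.
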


Although a proof of the above result has been given in \cite{GSZ24} recently,  our approach seems to be a more natural extension of Moeckel and Montgomery and we feel it is worth to include it here.



\section{Coordinate Transformation And Equations of Motion} \label{sec;co-tran}
Without loss of generality, let's assume $q(t)$ be a $\bk$-parabolic solution, with $\mathbf{k}=\{1,...,k\}$ for some $2\le k \le n$. Define a linear map $T:\mathbb{R}^{2k}\rightarrow\mathbb{R}^{2k}$, as 
$$ T(q_1,...,q_k)=(z_1,...,z_{k-1},c_\mathbf{k}):=(z,c), $$
where $z_i=q_i-c_\mathbf{k}\in\mathbb{R}^2$ and $z=(z_1,...,z_{k-1}) \in \rr^{2k}$. Since the mutual distances can be expressed by $z$, the potential function $U_\mathbf{k}(z)$ is still an analytic homogeneous function on $\mathbb{R}^{2k-2}\backslash\triangle_\mathbf{k}$, where 
 $$ \triangle_\mathbf{k}=\{z \in \rr^{2k}:r_{ij}=0\ \text{ for some } \{ i\neq j\} \subset \bk \}. $$
\\ \indent Then we have a time-dependent Lagrangian 
\begin{equation*}
    L(z,\dot{z},c,\dot{c},t)=\frac{1}{2}\dot{z}^TM\dot{z}+\frac{1}{2}m_0|\dot{c}|^2+U_\mathbf{k}+U_{\mathbf{k},\mathbf{k}^\prime}
\end{equation*}
where $m_0=\sum^k_{i=1}m_i$ and
\begin{align*}
    (T^{-1})^Tdiag(m_1,m_1,...,m_k,m_k)T^{-1}=
    \begin{pmatrix}
        M & 0 \\
        0 & m_0 
    \end{pmatrix}.
\end{align*}
Notice that $U_{\bk, \bk'}$ is the time-dependent term, as it depends on $q_{\bk'}(t) =(q_i(t))_{i =k+1}^n$. 

We introduce a Hermitian mass metric on $\mathbb{C}^{k-1}$
\begin{align*}
    \langle\langle v,w\rangle\rangle_{\mathbb{C}}=\bar{v}^TMw,\ \ \ \ \ \ \forall v,w\in\mathbb{C}^{k-1},
\end{align*}
where $\bar{v}$ is the complex conjugate of $v$. The real part $\langle\langle v,w\rangle\rangle= \text{re}\langle\langle v,w\rangle\rangle_{\mathbb{C}}$ is just the mass inner product.\\

Set $\mathbb{R}^{2k} \simeq \mathbb{C}^{k}$. As in \cite{MM25}, a local coordinate change $\psi:\mathbb{R}^+\times\mathbb{S}^1\times\mathbb{C}^{k-2}\times\mathbb{C}\rightarrow\mathbb{C}^{k}$ is 
$$ (z,c)=\psi(r,\theta,s,c)=(re^{i\theta}\frac{(s,1)}{||(s,1)||},c).$$
Introduce velocity variables $\rho=\dot{r}$, $\omega=\dot{s}$, and 
$$  \Omega(s,\omega)=\text{im}\langle\langle(s,1),(\omega,0)\rangle\rangle_{\mathbb{C}},\ \ G(s,\omega)=\text{re}\langle\langle(s,1),(\omega,0)\rangle\rangle_{\mathbb{C}} $$

We get the Lagrangian in the new coordinates $(r, \rho, \tht, \dot{\tht}, s, \om, c, \dot{c})$ as
\begin{align*}
    L=\frac{1}{2}\rho^2+\frac{1}{2}r^2\dot{\theta}^2+\frac{r^2}{2}||\omega||_{FS}^2+\frac{r^2}{2}\frac{\Omega(x,\omega)^2}{||(s,1)||^4}+\frac{r^2\dot{\theta}\Omega(s,\omega)}{||(s,1)||^2}+\frac{1}{r}V_\mathbf{k}(s)+\frac{1}{2}m_0|\dot{c}|^2+U_{\mathbf{k},\mathbf{k}^\prime},
\end{align*}
where $||\omega|| = ||(\omega,0)||$, $V_{\mathbf{k}}(s)=||(s,1)||U_{\mathbf{k}}(s,1)$ and 
\begin{align*}
    ||\omega||_{FS}^2&=\frac{||(s,1)||^2||(\omega,0)||^2-|\langle\langle(s,1),(\omega,0)\rangle\rangle_{\mathbb{C}}|^2}{(||(s,1)||^2)^2}\\
    &=\frac{||\omega||^2}{||(s,1)||^2}-\frac{G(s,\omega)^2+\Omega(s,\omega)^2}{(||(s,1)||^2)^2}.
\end{align*}

We will also use the notation $F(s,\omega)=||\omega||_{FS}^2$, which is the local representation of the square of the Fubini-Study metric on the complex projective space.\\
\indent Reverting to real coordinates, we can write the Fubini-Study norm as $F(s,\omega)=\omega^TA(s)\omega$ where $A(s)$ is a positive-definite $(2k-4)\times(2k-4)$ matrix. Observe that $\frac{\Omega(s,\omega)}{||(s,1)||^2}$ is real linear with respect to $\omega$, thus we can also write $\frac{\Omega(s,\omega)}{||(s,1)||^2}=B(s)\omega$ where $B(s)$ is a $1\times(2k-4)$ matrix. Then we have the corresponding Euler-Lagrange equations
\begin{equation}
\label{eq;EL-eq}
\begin{aligned}
    \dot{r}=&\rho\\
    \dot{\rho}=&rF(s,\omega)-\frac{1}{r^2}V_\mathbf{k}(s)+\frac{\mu^2}{r^3}+\frac{\partial U_{\mathbf{k},\mathbf{k}^\prime}}{\partial r}\\
    \dot{\theta}=&\frac{\mu}{r^2}-\frac{\Omega(s,\omega)}{||(s,1)||^2}\\
    \dot{\mu}=&\frac{\partial U_{\mathbf{k},\mathbf{k}^\prime}}{\partial \theta}\\
    \dot{s}=&\omega\\
    \dot{\omega}=&\frac{1}{2}A^{-1}(s)\nabla F(s,\omega)+\frac{1}{r^3}A^{-1}(s)\nabla V_{\mathbf{k}}(s)-\frac{2\rho\omega}{r}-A^{-1}(s)DA(s)(\omega)\omega\\&+\frac{1}{r^2}A^{-1}(s)\nabla U_{\mathbf{k},\mathbf{k}^\prime}-\frac{\dot{\mu}}{r^2}A^{-1}(s)B(s)\\
    \ddot{c}=&\frac{1}{m_0}\frac{\partial U_{\mathbf{k},\mathbf{k}^\prime}}{\partial c}
    \end{aligned}
\end{equation}
where $\nabla$ denotes the Euclidean gradient or partial gradient with respect to $s$ and 
$$ \mu=L_{\dot{\theta}}=r^2\dot{\theta}+\frac{r^2\Omega(s,\omega)}{||(s,1)||^2}. $$
Notice that $\mu$ is just the angular momentum, which means
\begin{equation}
\label{eq;angular-mom}
    \mu=\dot{z}^TMJz=\text{re}\langle\langle\dot{z},iz\rangle\rangle_{\mathbb{C}}.
\end{equation}

Equation \eqref{eq;EL-eq} is obtained by the following computation
\begin{align*}
    \frac{\partial L}{\partial\omega}=r^2A(s)\omega+\frac{r^2\Omega(s,\omega)}{||(s,1)||^4}B(s)+\frac{r^2\dot{\theta}}{||(s,1)||^2}B(s)=r^2A(s)\omega+\mu B(s),
\end{align*}
\begin{align*}
     \frac{d}{dt}\frac{\partial L}{\partial \omega} =r^2A(s)\dot{\omega}+2r\rho A(s)\omega+r^2DA(s)(\omega)\omega+\mu DB(s)\omega+\dot{\mu}\omega,
\end{align*}
\begin{align*}
    \frac{\partial L}{\partial s}&=\frac{r^2}{2}\nabla F(s,\omega)+\frac{r^2\Omega(s,\omega)}{||(s,1)||^2}DB(s)\omega+r^2\dot{\theta}DB(s)\omega+\frac{1}{r}\nabla V_{\mathbf{k}}(s)+\nabla U_{\mathbf{k},\mathbf{k}^\prime}\\
    &=\frac{r^2}{2}\nabla F(s,\omega)+\mu DB(s)\omega+\frac{1}{r}\nabla V_{\mathbf{k}}(s)+\nabla U_{\mathbf{k},\mathbf{k}^\prime}.
\end{align*}

Meanwhile in the new coordinates, the energy of the $\bk$-subsystem is 
\begin{equation}
\label{eq;eng-eq}
    h_{\mathbf{k}}=\frac{\rho^2}{2}+\frac{\mu^2}{2r^2}+\frac{r^2}{2}F(s,\omega)-\frac{1}{r}V_{\mathbf{k}}(s).
\end{equation}

The following asymptotic estimates will be needed in our proofs. 
\begin{lemma}
\label{lem;est-orbit}
    Assume $(s(t),1)$ converges to an $(s_0, 1) \in \rr^{2k-2} \setminus \Delta_{\bk}$, as $t\rightarrow \infty$, then we have the following asymptotic estimates
    \begin{equation}
    \label{eq;est-para}
         \left| \frac{\partial U_{\mathbf{k},\mathbf{k}^\prime}}{\partial r}(t) \right|=O(t^{-2}),\ \  \left| \frac{\partial U_{\mathbf{k},\mathbf{k}^\prime}}{\partial\theta}(t) \right|=O(t^{-\frac{4}{3}}),\ \ |\nabla U_{\mathbf{k},\mathbf{k}^\prime}(t)|=O(t^{-\frac{4}{3}}).
    \end{equation}  
\end{lemma}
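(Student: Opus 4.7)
The plan is to derive all three estimates by a direct chain-rule computation, using Definition \ref{df;k-para} together with the hypothesis that the limit $(s_0,1)$ lies in $\rr^{2k-2}\setminus \triangle_{\bk}$.

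First, I would translate the parabolic bounds on the mutual distances inside $\bk$ into a scaling for $r(t)$. By the Lagrange identity
\[
r^2 \;=\; I_{\bk}(q^c_{\bk}) \;=\; \frac{1}{2m_{\bk}} \sum_{i<j \in \bk} m_i m_j\, r_{ij}^2,
\]
the two-sided estimate $C_1 t^{2/3}\le r_{ij}(t)\le C_2 t^{2/3}$ from Definition \ref{df;k-para} forces $r(t) = \Theta(t^{2/3})$. Together with the lower bound $r_{ij}(t)\ge C_3 t$ for $i\in\bk,\,j\in\bk'$, this is all the size information about the mutual distances that is needed; no upper bound on $r_{ij}$ with $j\in\bk'$ is required, because those distances appear in the partial derivatives of $U_{\bk,\bk'}$ only through denominators.

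Next, I would differentiate $U_{\bk,\bk'}=\sum_{i\in\bk,\,j\in\bk'} m_i m_j/r_{ij}$ term by term. Only $q_i$ with $i\in\bk$ depends on the new coordinates, via the relation $z = re^{i\theta}(s,1)/\|(s,1)\|$ and the linear constraint $\sum_{i\in\bk} m_i(q_i - c_{\bk}) = 0$. Since $s(t)\to s_0$ with $(s_0,1)\notin \triangle_{\bk}$, both $\|(s,1)\|^{-1}$ and the first derivatives of $(s,1)/\|(s,1)\|$ in $s$ remain bounded along the orbit. This gives, for every $i\in\bk$,
\[
\left|\frac{\partial q_i}{\partial r}\right| = O(1), \qquad \left|\frac{\partial q_i}{\partial\theta}\right| = O(r) = O(t^{2/3}), \qquad \left|\frac{\partial q_i}{\partial s_l}\right| = O(r) = O(t^{2/3}).
\]
Since $r_{ij}=|q_i-q_j|$ and $q_j$ is independent of these coordinates, one has $|\partial r_{ij}/\partial y|\le |\partial q_i/\partial y|$ for $y\in\{r,\theta,s_l\}$. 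Plugging this into
\[
\frac{\partial U_{\bk,\bk'}}{\partial y} \;=\; -\sum_{i\in\bk,\,j\in\bk'} \frac{m_i m_j}{r_{ij}^2}\,\frac{\partial r_{ij}}{\partial y},
\]
and using $r_{ij}^{-2} = O(t^{-2})$, I would then read off $\partial_r U_{\bk,\bk'} = O(t^{-2})$ and both $\partial_\theta U_{\bk,\bk'}$ and $\nabla_s U_{\bk,\bk'}$ equal to $O(t^{2/3}\cdot t^{-2}) = O(t^{-4/3})$, which are exactly the claimed rates.

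There is no serious obstacle; the whole calculation is essentially book-keeping once the scaling $r = \Theta(t^{2/3})$ is in hand. The only mildly delicate point is controlling the derivatives of the unit vector $(s,1)/\|(s,1)\|$ along the orbit, which is guaranteed precisely by the collision-free hypothesis on the limit direction $(s_0,1)$.
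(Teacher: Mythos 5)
Your proof is correct and follows essentially the same route as the paper: differentiate $U_{\bk,\bk'}$ term by term, bound $r_{ij}^{-2}=O(t^{-2})$ for $i\in\bk$, $j\in\bk'$ from Definition \ref{df;k-para}, and use that the convergence of $(s(t),1)$ to a noncollision configuration keeps the derivatives of $z_i$ with respect to $r$, $\theta$, $s$ of size $O(1)$, $O(r)$, $O(r)$ respectively. (The constant in your Lagrange-identity display is off by a factor of $2$, but this does not affect the conclusion $r=\Theta(t^{2/3})$.)
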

\begin{proof}
View $z_i=q_i-c_{\mathbf{k}},i=1,...,k$ as functions of $(r,\theta,s)$, then
    \begin{align*}
        \frac{\partial U_{\mathbf{k},\mathbf{k}^\prime}}{\partial r}(t)=\sum_{\substack{i \in \bk, j \in \bk'}}\langle-\frac{m_im_j(q_i(t)-q_j(t))}{|q_i(t)-q_j(t)|^3},\frac{\partial z_i}{\partial r}\rangle.
    \end{align*}
By Definition \ref{df;k-para}, $|q_i(t)-q_j(t)|^{-1}=O(t^{-1})$. Meanwhile our assumption implies $|\frac{\partial z_i}{\partial r}|=O(1)$, thus $\frac{\partial U_{\mathbf{k},\mathbf{k}^\prime}}{\partial r}(t)=O(t^{-2})$. The other two estimates can be obtained similarly, after noticing that $|\frac{\partial z_i}{\partial \theta}(t)|=O(r)=O(t^{\frac{2}{3}})$ and $|\nabla z_i(t)|=O(r)=O(t^{\frac{2}{3}})$.
\end{proof}

A direct corollary of the above lemma is 
\begin{cor}
\label{cor;mu-P-Q-bound} $|\mu(t)|=O(1)$,  $|P(t)|=O(1)$ and $|Q(t)|=O(1)$, as $t \to \infty$, where
$$
\begin{aligned}
P(t)& =\mu^2(t)+r^3(t)\frac{\partial U_{\mathbf{k},\mathbf{k}^\prime}}{\partial r}(t); \\
Q(t) & =r^2(t)\Big(A^{-1}(t)\nabla U_{\mathbf{k},\mathbf{k}^\prime}(t)-\frac{\partial U_{\mathbf{k},\mathbf{k}^\prime}}{\partial\theta}(t)A^{-1}(t)B(t)\Big).
\end{aligned}
$$ 
\end{cor}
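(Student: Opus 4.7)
The plan is to derive all three bounds directly from Lemma \ref{lem;est-orbit} together with the parabolic growth $r(t) = O(t^{2/3})$ (which follows from Definition \ref{df;k-para}, since $r(t) = |z(t)|$ is controlled by the $r_{ij}$). The only non-algebraic input is an integration argument for $\mu$.

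First I would handle $Q(t)$, which is immediate. Under the standing assumption that $(s(t),1) \to (s_0, 1) \in \rr^{2k-2}\setminus \Delta_{\bk}$ (inherited from Lemma \ref{lem;est-orbit}), the matrices $A(s(t))$ and $B(s(t))$, together with $A^{-1}(s(t))$, are uniformly bounded in $t$. Combining $r^2(t) = O(t^{4/3})$ with the estimates $|\nabla U_{\bk,\bk'}(t)| = O(t^{-4/3})$ and $|\partial U_{\bk,\bk'}/\partial\theta(t)| = O(t^{-4/3})$ from Lemma \ref{lem;est-orbit}, both summands inside the parentheses defining $Q$ are $O(t^{-4/3})$, and multiplying by $r^2$ kills the growth exactly, giving $|Q(t)| = O(1)$.

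Next I would bound $\mu(t)$ by integrating its equation of motion. From \eqref{eq;EL-eq} we have $\dot\mu = \partial U_{\bk,\bk'}/\partial\theta$, and Lemma \ref{lem;est-orbit} tells us $|\dot\mu(t)| = O(t^{-4/3})$, which is integrable on $[t_0,\infty)$. Hence
\begin{equation*}
\mu(t) = \mu(t_0) + \int_{t_0}^{t} \frac{\partial U_{\bk,\bk'}}{\partial\theta}(\tau)\, d\tau
\end{equation*}
converges to a finite limit $\mu_\infty$ as $t \to \infty$, so in particular $|\mu(t)| = O(1)$.

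Finally, for $P(t)$ I would simply combine the two preceding bounds with $r^3(t) = O(t^2)$. The first term $\mu^2(t)$ is $O(1)$ by the previous step, while $r^3(t)\,\partial U_{\bk,\bk'}/\partial r(t) = O(t^2)\cdot O(t^{-2}) = O(1)$ by Lemma \ref{lem;est-orbit}. Adding gives $|P(t)| = O(1)$.

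No step poses a serious obstacle: the matching of the parabolic power $t^{2/3}$ against the decay rates in Lemma \ref{lem;est-orbit} is the whole content, and the only analytic (as opposed to algebraic) input is the observation that $t^{-4/3}$ is integrable at infinity, which yields both the bound on $\mu$ and, in fact, the existence of the limit $\mu_\infty$ (a fact likely to be useful in the sequel).
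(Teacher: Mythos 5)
Your proof is correct and is exactly the argument the paper intends: the corollary is stated without proof as a ``direct corollary'' of Lemma \ref{lem;est-orbit}, obtained by matching $r(t)=O(t^{2/3})$ against the decay rates there, and the integration of $\dot\mu=\partial U_{\bk,\bk'}/\partial\theta=O(t^{-4/3})$ is the same device the authors use explicitly in the collision analogue (Lemma \ref{lem;est-col-mu}). Nothing to add.
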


To summarize we have showed $(r,\rho,s,\omega)(t)$ satisfies the following non-autonomous equation
\begin{equation}
\label{eq;EL-eq2}
\begin{aligned}
    \dot{r}=&\rho\\
    \dot{\rho}=&rF(s,\omega)-\frac{1}{r^2}V_\mathbf{k}(s)+\frac{1}{r^3}P(t)\\
    \dot{s}=&\omega\\
    \dot{\omega}=&\frac{1}{2}\tilde{\nabla} F(s,\omega)+\frac{1}{r^3}\tilde{\nabla} V_{\mathbf{k}}(s)-\frac{2\rho\omega}{r}-A^{-1}(s)DA(s)(\omega)\omega+\frac{1}{r^4}Q(t)
    \end{aligned}
\end{equation}
where $\tilde{\nabla}=A^{-1}(s)\nabla$ denotes the gradient with respect to the Fubini-Study metric.

By the third equation in \eqref{eq;EL-eq}, there is a constant $C>0$,
\begin{equation}
\label{eq;tht-dot} |\dot{\theta}|\le \frac{|\mu|}{r^2}+C||\omega||_{FS}.
\end{equation}
Since $|\mu(t)|=O(1)$ and $r(t)=O(t^{\frac{2}{3}})$, $\theta(t)$ converges to a limit, when $t\rightarrow+\infty$, if the Fubini-Study arclength $L(s)$ given as below is finite,  
\begin{equation}
\label{eq;len-eq}
    L(s)=\int||\dot{s}(t)||_{FS}\ dt=\int||\omega(t)||_{FS}\ dt.
\end{equation}

To better describe the asymptotic behavior of the solution, we introduce the following modification of the McGehee coordinate with the rescaled variables and time parameter defined as
\begin{equation} \label{eq;McGehee-para}
u=r^{-\frac{1}{2}}, \; v=\sqrt{r}\rho, \; w=r^{\frac{3}{2}}\omega, \; d \tau = r^{-\frac{3}{2}} dt
\end{equation}
Using $'$ to represent derivatives with respect to $\tau$, then $\gm(\tau)=(u,v,s,w)(\tau)$ satisfies equation
\begin{equation}
\label{eq;res-eq}
    \begin{aligned}
        u^\prime=&-\frac{1}{2}uv\\
        v^\prime=&\frac{v^2}{2}+F(s,w)-V_{\mathbf{k}}(s)+u^2P(\tau)\\
        s^\prime=&w\\
        w^\prime=&-\frac{vw}{2}+\tilde{\nabla}V_{\mathbf{k}}(s)+\frac{1}{2}\tilde{\nabla}F(s,w)-A^{-1}(s)DA(s)(w)w+u^2Q(\tau)
    \end{aligned}
\end{equation}
where $P(\tau)=P(t(\tau))$, $Q(\tau)=Q(t(\tau))$.  

In the new variables, the energy of the $\bk$-subsystem is
\begin{equation}
\label{eq;eng-eq2}
    h_{\mathbf{k}}=u^2\Big(\frac{v^2}{2}+\frac{u^2\mu^2}{2}+\frac{1}{2}F(s,w)-V_{\mathbf{k}}(s)\Big).
\end{equation}

\begin{lemma}
\label{lem;hk-est-2} $u^{-2}(\tau)|h_{\mathbf{k}}(\tau)| = o(1)$, as $\tau \to \infty$. 
\end{lemma}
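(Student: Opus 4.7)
The plan is to derive this directly from Proposition \ref{prop;para-energy}(a) together with the definition of a $\bk$-parabolic solution and the McGehee-type rescaling \eqref{eq;McGehee-para}. The key observation is that everything can be translated back to the original time $t$, where clean polynomial bounds are already available.

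First, I would control $r(t)$. Since $r^2 = \|z\|_M^2 = \sum_{i \in \bk} m_i |q_i - c_\bk|^2 = I_\bk(t)$, and Definition \ref{df;k-para} gives $C_1 t^{2/3} \le r_{ij}(t) \le C_2 t^{2/3}$ for all $i \ne j \in \bk$, a standard computation (e.g.\ $I_\bk = (2m_\bk)^{-1}\sum_{i,j\in\bk} m_i m_j r_{ij}^2$) yields $c_1 t^{2/3} \le r(t) \le c_2 t^{2/3}$ for some positive constants $c_1, c_2$ and all $t$ large. Consequently
\[
    u(t)^{-2} = r(t) \;\asymp\; t^{2/3}.
\]

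Second, I would invoke Proposition \ref{prop;para-energy}(a), which gives
\[
    |h_\bk(t)| = O(t^{-5/3}), \qquad t \to \infty.
\]
Combining the two estimates,
\[
    u^{-2}(t)\,|h_\bk(t)| \;=\; O(t^{2/3})\cdot O(t^{-5/3}) \;=\; O(t^{-1}) \;\longrightarrow\; 0,
\]
as $t \to \infty$.

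Finally, I would translate the limit from $t$ to $\tau$. Since $d\tau/dt = r^{-3/2} \asymp t^{-1} > 0$ for $t$ large, the reparameterization $t \mapsto \tau$ is a monotone bijection onto $(\tau_0,\infty)$; in fact $\tau(t) \asymp \log t$. Hence $\tau \to \infty$ is equivalent to $t \to \infty$, and the estimate above immediately gives $u^{-2}(\tau)\,|h_\bk(\tau)| = o(1)$ as $\tau \to \infty$. There is no real obstacle here once Proposition \ref{prop;para-energy}(a) has been established (which is the content of a separate section of the paper); the lemma is essentially a bookkeeping exercise converting the $t$-estimates into the McGehee variables, and its role is presumably to say that in the rescaled equation \eqref{eq;res-eq}, the quantity in parentheses in \eqref{eq;eng-eq2} tends to zero as $\tau \to \infty$, which is exactly the statement that the limit set of $\gamma(\tau)$ lies on the zero-energy level of the autonomous part of \eqref{eq;res-eq}.
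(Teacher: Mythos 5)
Your proposal is correct and follows essentially the same route as the paper, whose entire proof is the one-line observation that the claim follows from Proposition \ref{prop;para-energy}(a); you have merely filled in the routine details, namely that $u^{-2}=r\asymp t^{2/3}$ by Definition \ref{df;k-para}, so $u^{-2}|h_{\bk}|=O(t^{-1})$, and that $t\to\infty$ corresponds to $\tau\to\infty$ under the monotone time change \eqref{eq;McGehee-para}.
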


\begin{proof}
 This follows directly from property (a) in Proposition \ref{prop;para-energy}. 
 \end{proof}  

\begin{remark}
    Suppose that the orbit $q(t)$ exists on $(-2\delta,+\infty)$ for some $\delta>0$. Since we only care about the behavior of the orbit when time goes to infinity, we can write the regularization more meticulously. Choose a smooth partition of unity $\{\psi_1,\psi_2\}$ subordinating to $\{(-\delta,0),(-\delta/2,+\infty)\}$. Then we define $\tilde{r}(t)=\psi_1(t)(t+\delta)^{-2}+\psi_2(t)r(t)$ which is a smooth positive function on $(-\delta,+\infty)$ and there exists a $t_0>0$ such that $\tilde{r}(t)=r(t)$ when $t\ge t_0$. Now the new time variable $\tau$ defined by $\frac{d\tau}{dt}=\tilde{r}^{-\frac{3}{2}}(t)$ satisfies $\tau(t)\rightarrow-\infty$ when $t\rightarrow-\delta$ and $\tau(t)\rightarrow+\infty$ when $t\rightarrow +\infty$. Particularly, $P(\tau)$ and $Q(\tau)$ are uniformly bounded. In this case, we still denote $(u(\tau),v(\tau),s(\tau),w(\tau))$ as the solution of \eqref{eq;res-eq} and it is just the original orbit after some $\tau_0$.
\end{remark}

\begin{theorem}
    Assume $s(t)$ converges to an $s_0$, when $\tau \to \infty$ with $\frac{(s_0, 1)}{\| (s_0, 1)\|}$ being an isolated CC, then $\gamma(\tau)$ converges to an equilibrium $(0,v_0,s_0,0)$ of \eqref{eq;res-eq} with $v_0=\sqrt{2V_{\mathbf{k}}(s_0)}$ and $\tilde{\nabla}V_{\mathbf{k}}(s_0)=0$.  
\end{theorem}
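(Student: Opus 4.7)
My plan is to determine the limit of $\gamma(\tau) = (u, v, s, w)(\tau)$ one coordinate at a time, using the rescaled system \eqref{eq;res-eq}, the energy identity \eqref{eq;eng-eq2}, Lemma \ref{lem;hk-est-2}, and Corollary \ref{cor;mu-P-Q-bound}. Three of the coordinates are essentially immediate. First, $u(\tau) = r^{-1/2}(\tau) \to 0$ follows from the parabolic growth $r(t) \asymp t^{2/3}$ in Definition \ref{df;k-para}. Second, $s(\tau) \to s_0$ is part of the hypothesis, and since $\frac{(s_0,1)}{\|(s_0,1)\|}$ is an isolated CC, $s_0$ is a critical point of the reduced potential $V_\bk$, i.e., $\tilde\nabla V_\bk(s_0) = 0$. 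So the substantive work reduces to showing $w(\tau) \to 0$, after which the energy identity \eqref{eq;eng-eq2} — with the $u^2\mu^2$ term negligible by Corollary \ref{cor;mu-P-Q-bound} and the left-hand side negligible by Lemma \ref{lem;hk-est-2} — will give $v^2/2 \to V_\bk(s_0)$ and, once $v$ is shown eventually positive, pin down $v \to v_0 = +\sqrt{2 V_\bk(s_0)}$.

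For $w \to 0$ I would set up a Gronwall-type dissipation. Differentiating $F(s, w) = w^T A(s) w$ along the flow and substituting the $w'$-equation in \eqref{eq;res-eq} — the two copies of $w^T DA(s)(w) w$ cancel cleanly — produces the identity
\begin{equation*}
\frac{d}{d\tau} F(s, w) = -v\, F(s, w) + 2\, w \cdot \nabla V_\bk(s) + 2 u^2\, w^T A(s) Q(\tau).
\end{equation*}
Once it is known that $v(\tau) \ge v_* > 0$ for large $\tau$, the right-hand side is dominated by $-v_* F + o(1)\sqrt{F} + o(1)$, using $\nabla V_\bk(s(\tau)) \to \nabla V_\bk(s_0) = 0$ together with $u^2 Q(\tau) = o(1)$ from Corollary \ref{cor;mu-P-Q-bound} and $u \to 0$. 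Absorbing $o(1)\sqrt{F}$ into $\tfrac{v_*}{2} F + o(1)$ and applying a standard Gronwall comparison then force $F(s(\tau), w(\tau)) \to 0$, hence $w \to 0$.

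The main obstacle is the positivity bound $v(\tau) \ge v_* > 0$ for large $\tau$, which couples the $u$- and $v$-equations nontrivially. The first equation in \eqref{eq;res-eq} gives $\int_{\tau_0}^\tau v(\sigma)\, d\sigma = -2\log\bigl(u(\tau)/u(\tau_0)\bigr) \to +\infty$, so $v$ has positive average. Coupling this with the near-monotonicity
\begin{equation*}
v'(\tau) = H(\tau) + \tfrac{1}{2} F(s, w) + u^2\bigl(P(\tau) - \mu^2/2\bigr) \ge -\epsilon
\end{equation*}
for $\tau$ large and any $\epsilon > 0$ — obtained by eliminating $v^2 - 2V_\bk(s)$ from the $v'$-equation via the energy identity, where $H := u^{-2} h_\bk \to 0$ — rules out arbitrarily long stretches on which $v$ is non-positive, giving $v(\tau) \ge v_* > 0$ eventually. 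With $w \to 0$ and $v \to v_0 > 0$ in hand, a direct substitution verifies that $(0, v_0, s_0, 0)$ is an equilibrium of \eqref{eq;res-eq}: the perturbations $u^2 P$ and $u^2 Q$ vanish at $u = 0$, while the conditions $v_0^2/2 = V_\bk(s_0)$ and $\tilde\nabla V_\bk(s_0) = 0$ annihilate the remaining right-hand sides.
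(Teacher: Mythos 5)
Your overall architecture matches the paper's: both arguments rest on the energy identity \eqref{eq;eng-eq2}, Lemma \ref{lem;hk-est-2}, the boundedness of $\mu$, $P$, $Q$ from Corollary \ref{cor;mu-P-Q-bound}, and the observation that $u\to 0$ together with $u'=-\tfrac12 uv$ forces $\int v\,d\tau=+\infty$. Your auxiliary steps check out: the identity $\frac{d}{d\tau}F(s,w)=-vF+2\,w\cdot\nabla V_{\bk}(s)+2u^{2}w^{T}A(s)Q$ is correct (the $DA$ terms do cancel), and once $v\ge v_{*}>0$ is known your Gronwall argument does give $w\to 0$. (The paper instead obtains $v\to v_0$ first and then reads $w\to 0$ directly off the energy identity, which is shorter, but your route is also valid at that stage.)

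There is, however, a genuine gap at the crucial step: the eventual lower bound $v(\tau)\ge v_{*}>0$. The two facts you invoke --- $\int v\,d\tau=+\infty$ and $v'\ge-\ep$ for $\tau$ large --- do not imply such a bound. For instance $v(\tau)=1+\sin(\log\log\tau)$ is bounded, has divergent integral, and satisfies $v'\to 0$, yet $\liminf v=0$ and $v$ does not converge; and even granting your intermediate claim that arbitrarily long non-positive stretches are excluded, that alone never yields a uniform positive lower bound. The inequality $v'\ge-\ep$ is obtained by discarding the term $\tfrac12 F(s,w)\ge 0$ from your own identity $v'=u^{-2}h_{\bk}+\tfrac12 F(s,w)+u^{2}(P-\mu^{2}/2)$, and that discarded term is exactly what rescues the argument: by the energy identity, $F=2\big(u^{-2}h_{\bk}+V_{\bk}(s)\big)-v^{2}-u^{2}\mu^{2}$, so whenever $v^{2}<2(V_{\bk}(s_0)-2\ep)$ one has $v'>\ep>0$ for $\tau$ large. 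This is the form the paper exploits (equivalently $v'\ge -\tfrac{v^{2}}{2}+V_{\bk}(s_0)-\ep$): the strip $|v|<\sqrt{2(V_{\bk}(s_0)-2\ep)}$ is strictly repelling upward, $v$ must enter it or already lie above it (since $v\le 0$ for all large $\tau$ would contradict $u\to 0$), and once $v$ exits through the top it can never return. With this repair your proof closes and then coincides in substance with the paper's.
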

\begin{proof}
    By our assumption, as $\tau \to \infty$, $u(\tau)$ converges to zero and $s(\tau)$ converges to a critical point $s_0$ of $V_{\mathbf{k}}|_{\mathbb{C}P^{k-2}}$, i.e., $\tilde{\nabla}V_{\mathbf{k}}(s_0)=0$. By Lemma \ref{lem;hk-est-2}, 
    \begin{align*}
        0=\underset{\tau\rightarrow+\infty}{\limsup}\ \Big(\frac{v^2}{2}+\frac{u^2\mu^2}{2}+\frac{1}{2}F(s,w)-V_{\mathbf{k}}(s)\Big)(\tau)\ge\underset{\tau\rightarrow+\infty}{\limsup}\ \frac{v^2}{2}-V_{\mathbf{k}}(s_0).
    \end{align*}
   From \eqref{eq;res-eq} and \eqref{eq;eng-eq2},  
   \begin{align*}
       v^\prime=2u^{-2}h_{\mathbf{k}}-\frac{v^2}{2}+V_{\mathbf{k}}(s)-u^2\mu^2+u^2P(\tau).
   \end{align*}
   We claim that for every sufficiently small $\ep>0$, there exists a $\tau_0>0$ such that for any $\tau > \tau_0$, 
    \begin{align*}
        \frac{v^2}{2}\ge V_{\mathbf{k}}(s_0)-2\ep\ \ and\ \ v>0.
    \end{align*}
    This implies $\underset{\tau\rightarrow+\infty}{\liminf}\ \frac{v^2}{2}\ge V_{\mathbf{k}}(s_0)$. We can choose $\tau_0>0$ such that
   \begin{align*}
       v^\prime\ge -\frac{v^2}{2}+V_{\mathbf{k}}(s_0)-\ep,\ \ \ \tau\ge\tau_0
   \end{align*}
   Now if $-\sqrt{2(V_{\mathbf{k}}(s_0)-2\ep)}<v<\sqrt{2(V_{\mathbf{k}}(s_0)-2\ep)}$, we have $v^\prime>\ep$. Since $u(\tau)$ converges to zero, $v$ cannot be less than or equal to zero all the time. Therefore, when $\tau$ is sufficiently large, $v$ will enter this range. Then $v(\tau)$ would increase beyond $\sqrt{2(V_{\mathbf{k}}(s_0)-2\ep)}$ and never decrease below it again. Combined with the first inequality in the proof, we derive $\underset{\tau\rightarrow+\infty}{\lim}\ v(\tau)=\sqrt{2V_{\mathbf{k}}(s_0)}$. Then from the energy equation, we obtain that $\underset{\tau\rightarrow+\infty}{\lim}\ w(\tau)=0$.
\end{proof}

We say $\gm(\tau)$ converges to an isolated (resp. nondegenerate or degenerate) equilibrium, if the corresponding $\frac{(s_0, 1)}{\| (s_0, 1)\|}$ is an isolated (resp. nondegenerate or degenerate) CC.

\section{No infinite spin for parabolic solutions} \label{sec:no_infinite_spin_for_parabolic_solutions}

A proof of Theorem \ref{them;no-spin-para} will be given in this section. Let's assume $q(t)$ is a $\bk$-parabolic solution and $\gm(\tau)=(u, v, s,w)(\tau)$ is the corresponding solution of \eqref{eq;res-eq}, which converges to an isolated equilibrium $p_0=(0, v_0, s_0, 0)$ with $v_0>0$, as $\tau \to \infty$. Then the linearization of \eqref{eq;res-eq} at $p_0$ is
\begin{align} \label{eq;lin}
    \begin{bmatrix}
\delta u'\\
\delta v^\prime\\
\delta s^\prime\\
\delta w^\prime
\end{bmatrix}=
\begin{bmatrix}
   -v_0/2 & 0 & 0 & 0\\
    0 & v_0& 0 &0\\
    0 & 0 & 0 & I\\
    0& 0 & D\tilde{\nabla} V_\mathbf{k}(s_0) & -\frac{1}{2} v_0 I
\end{bmatrix}
\begin{bmatrix}
    \delta u\\
    \delta v\\
    \delta s\\
    \delta w
\end{bmatrix}
\end{align}
Let $B$ be the lower right $(4k-8)\times(4k-8)$ block. If $\delta s$ satisfies $D\tilde{\nabla} V_\mathbf{k}(s_0)\delta s=c\delta s$ then $(\delta s,\delta w)=(\delta s,\lambda_{\pm}\delta s)$ is an eigenvector of $B$ with eigenvalue
\begin{align*}
    \lambda_{\pm}=\frac{-v_0\pm\sqrt{v_0^2+16c}}{4}.
\end{align*}
Since $v_0>0$, it follows that any nonreal eigenvalues are stable. Also $\text{re}(\lambda_{-})<0$ and we have $\lambda_{+}=0$ if and only if $c=0$. Let $\beta$ be the spectral gap of this linearization matrix.

Notice that $\mathcal M = \{u=0\}$ is an invariant submanifold of \eqref{eq;res-eq}. By Corollary \ref{cor;mu-P-Q-bound}, \eqref{eq;res-eq} also satisfies the conditions required for \eqref{eq;nonauto}. This allows us to prove the following result using Theorem \ref{thm;cen-mfd-time-dep}.

\begin{proposition}
\label{prop;app-sol} Let $\gm(\tau)$ be the solution of \eqref{eq;res-eq} as above, then there is a solution $\hat{\gm}(\tau)$ of \eqref{eq;res-eq}, which is contained in $\mathcal M$ and satisfies 
$$|\gm(\tau) -\hat{\gm}(\tau)| \le C e^{-\eta \tau}, \; \forall \tau >\tau_0,$$
for some positive constant $C$ and $\eta$. 
\end{proposition}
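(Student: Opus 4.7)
The plan is to recognize Proposition \ref{prop;app-sol} as a direct application of Theorem \ref{thm;cen-mfd-time-dep}. I would first translate coordinates so that the isolated equilibrium $p_0=(0,v_0,s_0,0)$ sits at the origin, writing $\tilde{x}=(u,v-v_0,s-s_0,w)$, and split \eqref{eq;res-eq} as
\begin{equation*}
  \tilde{x}^\prime = f(\tilde{x})+g(\tilde{x},\tau),\qquad g(\tilde{x},\tau)=\bigl(0,\;u^2P(\tau),\;0,\;u^2Q(\tau)\bigr),
\end{equation*}
where $f$ collects every autonomous term and $g$ isolates the entire time-dependent contribution. The linearization $Df(0)$ is precisely the matrix in \eqref{eq;lin}, and the convergence $\tilde{x}(\tau)\to 0$ is the content of the theorem immediately preceding the proposition.

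Next I would verify the hypotheses of Theorem \ref{thm;cen-mfd-time-dep} with $\mathcal{N}=\mathcal{M}=\{u=0\}$. Invariance of $\mathcal{M}$ is immediate from $u^\prime=-uv/2$, and $p_0\in\mathcal{M}$; the explicit $u^2$ factor in $g$ gives both $g|_{\mathcal{M}}\equiv 0$ and the estimate $g(\tilde{x},\tau)=O(|\tilde{x}|^2)$ uniformly for $|\tilde{x}|$ small, while Corollary \ref{cor;mu-P-Q-bound} yields the uniform bound $|P(\tau)|+|Q(\tau)|=O(1)$, so $g(\tilde{x},\tau)$ is bounded in $\tau$ for every fixed $\tilde{x}$. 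The only non-routine condition is the exponential decay along the orbit. From $u^\prime=-uv/2$ and $v(\tau)\to v_0>0$, for every small $\ep>0$ there is $\tau_1$ with $v(\tau)\ge v_0-\ep$ on $[\tau_1,\infty)$, whence
\begin{equation*}
  u(\tau)\le u(\tau_1)\,e^{-(v_0-\ep)(\tau-\tau_1)/2}\quad\text{and}\quad |g(\tilde{x}(\tau),\tau)|=O\bigl(e^{-(v_0-\ep)\tau}\bigr).
\end{equation*}
Inspection of \eqref{eq;lin} shows that $-v_0/2$ is an eigenvalue, so the spectral gap satisfies $\beta\le v_0/2<v_0$. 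Given any $\al\in(0,\beta)$, I would choose $\ep$ small enough that $\al<v_0-\ep$, which gives $\sup_{\tau\ge\tau_1}e^{\al\tau}|g(\tilde{x}(\tau),\tau)|<\infty$, as required by \eqref{eq;con-nonauto}.

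Applying Theorem \ref{thm;cen-mfd-time-dep} then delivers a solution $y(\tau)\subset\mathcal{M}$ with $|y(\tau)-\tilde{x}(\tau)|\le Ce^{-\eta\tau}$; since $g$ vanishes on $\mathcal{M}$, this $y$ also solves the full system \eqref{eq;res-eq}, and translating back gives the desired $\hat{\gm}(\tau)$. The main obstacle I anticipate is the coordinated verification that the decay rate of $u(\tau)$ dominates the spectral gap $\beta$; this hinges on recognising the eigenvalue $-v_0/2$ in \eqref{eq;lin} and on the strict positivity $v_0>0$ established in the preceding theorem. Everything else in the argument is bookkeeping to fit \eqref{eq;res-eq} into the template of \eqref{eq;nonauto}.
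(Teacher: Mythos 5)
Your proposal is correct and follows essentially the same route as the paper: identify $\mathcal M=\{u=0\}$ as the invariant manifold, use $u'=-uv/2$ with $v\to v_0>0$ to get exponential decay of $u$ (hence of $g=(0,u^2P,0,u^2Q)$) at a rate exceeding the spectral gap $\beta\le v_0/2$ forced by the eigenvalue $-v_0/2$, invoke Corollary \ref{cor;mu-P-Q-bound} for boundedness of $P,Q$, and apply Theorem \ref{thm;cen-mfd-time-dep}. Your write-up is in fact slightly more explicit than the paper's about verifying each hypothesis of \eqref{eq;nonauto} and \eqref{eq;con-nonauto}, but the argument is the same.
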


\begin{proof}
Recall that $u^\prime =-\frac{1}{2}uv$ and $v(\tau)\rightarrow v_0$ with $v_0>0$, $u(\tau)$ converges to $0$ exponentially. This implies $u(\tau)<e^{-\frac{1}{4}v_0\tau}$, when $\tau$ is sufficiently large. 

Since $-\frac{v_0}{2}$ is an eigenvalue, when $\beta\le \frac{1}{2}v_0$, we have $e^{\al \tau}u^2(\tau)\rightarrow 0$ as $\tau\rightarrow+\infty$ for every $\al\in(0,\beta)$. By Corollary \ref{cor;mu-P-Q-bound}, both $P(\tau)$ and $Q(\tau)$ are bounded, hence $\gamma(\tau)$ satisfies the conditions in Theorem \ref{thm;cen-mfd-time-dep}. As a result, we can find a solution $\hat{\gamma}(\tau)$ contained in the submanifold $\mathcal M$, such that the distance between $\gamma(\tau)$ and $\hat{\gamma}(\tau)$ converges to zero exponentially. Thus $\hat{\gamma}(\tau)\rightarrow p_0$ as $\tau\rightarrow +\infty$ as well.\\
\end{proof}
 
If the matrix $D\tilde{\nabla}V_\mathbf{k}(s_0)$ is nonsingular, the corresponding equilibrium $p_0$ is hyperbolic. Then any solution $\hat{\gm}(\tau)$ in $\mathcal M$ approaching $p_0$, as $\tau\rightarrow \infty$, are in the stable manifold and converge exponentially fast. From this, it follows that the integrand of the arclength integral \eqref{eq;len-eq} converges to $0$ and therefore $L(\hat{s})<\infty$. Then the asymptotic estimate of the distance between $\gamma(\tau)$ and $\hat{\gamma}(\tau)$ given by Proposition \ref{prop;app-sol} implies $L(s)<\infty$ as well. This proves Theorem \ref{them;no-spin-para}, when $p_0$ is nondegenerate.

Now assume $p_0$ is a degenerate equilibrium. Again by Proposition \ref{prop;app-sol}, we can find a solution $\hat{\gm}(\tau) = (\hat u, \hat v, \hat s, \hat w)(\tau)$ of \eqref{eq;res-eq} entirely contained in $\mathcal M$, which converges to $\gm(\tau)$ exponentially fast as $\tau \to \infty$. By the same argument as above, we have $L(s) < \infty$, once $L(\hat s) <\infty$ is proven. 

Since $\hat{\gm}(\tau) \in \mathcal M$, we only need to consider the restriction of \eqref{eq;res-eq} on $\mathcal M$, which is
\begin{equation}
\label{eq;res-eq-submnf}
    \begin{aligned}
        v^\prime=&\frac{v^2}{2}+F(s,w)-V_{\mathbf{k}}(s)\\
        s^\prime=&w\\
        w^\prime=&-\frac{vw}{2}+\tilde{\nabla}V_{\mathbf{k}}(s)+\frac{1}{2}\tilde{\nabla}F(s,w)-A^{-1}(s)DA(s)(w)w
    \end{aligned}
\end{equation}

As $p_0$ is degenerate, it has a local center manifold $\mathcal{W}_{\mathcal{U}}^c$ for some neighborhood $\mathcal{U}$, and by \eqref{eq;lin}, the center manifold (assume $\dim \mathcal{W}_{\mathcal{U}}^c=k$) is entirely contained in the invariant submanifold $ \mathcal M \cap \{\frac{v^2}{2}+\frac{1}{2}F(s,w)-V_{\mathbf{k}}(s)=0\}$. By choosing a suitable coordinate, ${W}_{\mathcal{U}}^c$ has the form of a graph
\begin{align*}
    {W}_{\mathcal{U}}^c=\{\big(0,-\sqrt{2V_{\bk}(s(x))-||w(x)||_{FS}^2},x,f(x),\phi(x),\psi(x)\big):x\in\mathcal{U}\subset\mathbb{R}^k\},
\end{align*}
where $\mathcal{U}$ is a neighborhood of the origin in $\mathbb{R}^k$ and $s(x)=(x,f(x))$, $w(x)=(\phi(x),\psi(x))$.\\
\indent Since \eqref{eq;res-eq-submnf} is time independent, by the center manifold theorem in \cite[page 330]{Br03}, we can find a solution $\tilde \gm(\tau)=(0, \tilde v, \tilde s, \tilde w)(\tau)$ of \eqref{eq;res-eq} contained in the center manifold, and a constant $\eta>0$, such that 
$$ e^{\eta \tau} |\hat{\gm}(\tau) -\tilde{\gm}(\tau)| \to 0, \; \text{ as } \tau \to \infty. $$
Therefore it is enough to show $L(\tilde s) < \infty$.\\
\indent We point out that \eqref{eq;res-eq-submnf} is exactly the same as the restriction of the vector field on the total collision manifold studied by Moeckel and Montgomery, see equation (3.1) in \cite{MM25}, by the same argument given in \cite[Section 4]{MM25}, we can pull-back the equations $\eqref{eq;res-eq-submnf}$ to $\mathcal{U}$ and get the pull-back differential equation on the center manifold, which can be approximated by a gradient. 
\begin{lemma}\label{lem;flow-cen-manf}
    The differential equation on the center manifold is $x^\prime=\phi(x)$ where $\phi(x)=k\tilde{\nabla}W(x)+\gamma(x)$ where $W(x)=V_{\bk}(x,\phi(x))$, $k=2/v(0)>0$ and $\gamma(x)=o(|\tilde{\nabla}W(x)|)$.
\end{lemma}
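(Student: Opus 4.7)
The plan is to follow the pull-back computation in \cite[Section 4]{MM25}, since the restricted system \eqref{eq;res-eq-submnf} is algebraically identical to the vector field on the total collision manifold studied there, with $v(0) = v_0$ here playing the role of their normalization constant.

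First I would read off the ODE on $\mathcal{W}_{\mathcal{U}}^c$ from the $s$-equation in \eqref{eq;res-eq-submnf}. Using the graph parameterization $s(x) = (x, f(x))$, $w(x) = (\phi(x), \psi(x))$ and the tangency of $\mathcal{W}_{\mathcal{U}}^c$ to the center subspace $\{\delta s \in \ker D\tilde{\nabla}V_{\bk}(s_0),\; \delta w = 0\}$, each of $f,\phi,\psi$ vanishes to first order at $x=0$. Substituting into $s' = w$ and reading off the first block immediately gives $x' = \phi(x)$, which is the required form of the ODE on the center manifold.

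Next I would relate $\phi(x)$ to $\tilde{\nabla}W(x)$ using invariance of $\mathcal{W}_{\mathcal{U}}^c$ together with the $w'$-equation of \eqref{eq;res-eq-submnf}. Differentiating $w(x)=(\phi(x),\psi(x))$ along a solution and equating with the right-hand side of the $w'$-equation produces a relation whose first block reads, schematically,
\begin{equation*}
D\phi(x)\,\phi(x) \;=\; -\tfrac{v(x)}{2}\phi(x) + \bigl(\tilde{\nabla}V_{\bk}(s(x))\bigr)_c + \tfrac{1}{2}\bigl(\tilde{\nabla}F(s,w)\bigr)_c - \bigl(A^{-1}(s)DA(s)(w)w\bigr)_c,
\end{equation*}
where the subscript $c$ denotes the $x$-block and $v(x)$ is determined by the zero-energy constraint with $v(0) = v_0 > 0$. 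Since $D\phi(0)=0$, the left-hand side is $o(|\phi(x)|)$; the Fubini-Study and connection terms are quadratic in $w$, hence $O(|\phi(x)|^2)$. Solving for $\phi$ to leading order gives $\phi(x) = \tfrac{2}{v(0)}\bigl(\tilde{\nabla}V_{\bk}(s(x))\bigr)_c + \text{h.o.t.}$, and the chain rule applied to $W(x) = V_{\bk}(x,f(x))$, together with $Df(0)=0$, identifies this center block with $\tilde{\nabla}W(x)$ up to higher-order corrections. This produces $\phi(x) = k\,\tilde{\nabla}W(x) + \gamma(x)$ with $k = 2/v(0) > 0$.

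The main obstacle is verifying $\gamma(x) = o(|\tilde{\nabla}W(x)|)$ in the degenerate case, where $\tilde{\nabla}W$ may vanish to arbitrary order at $x=0$. The key is a bootstrap: the leading-order identity forces $|\phi(x)| = O(|\tilde{\nabla}W(x)|)$, so every error term in the decomposition above is at least quadratic in $|\tilde{\nabla}W(x)|$, hence genuinely $o(|\tilde{\nabla}W(x)|)$. This estimate, carried out in detail in \cite[Section 4]{MM25}, transfers to our setting without change once one notes that the only new term appearing in the full system \eqref{eq;res-eq} relative to the total collision case is $u^2 Q(\tau)$, which vanishes identically on $\mathcal M = \{u=0\}$ and therefore plays no role on $\mathcal{W}_{\mathcal{U}}^c \subset \mathcal M$.
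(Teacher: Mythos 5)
Your proposal is correct and follows essentially the same route as the paper, which itself simply notes that the restricted system \eqref{eq;res-eq-submnf} coincides with equation (3.1) of \cite{MM25} (with $v_0>0$ rather than $v_0<0$, whence $k=2/v(0)>0$) and defers the pull-back computation and the $o(|\tilde{\nabla}W(x)|)$ estimate to Lemma 4.3 there. The details you supply --- the invariance relation on the graph, the tangency conditions $Df(0)=D\phi(0)=D\psi(0)=0$, and the observation that the extra term $u^2Q(\tau)$ vanishes identically on $\mathcal M=\{u=0\}$ --- are consistent with that argument.
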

\begin{lemma}\label{lem;grd-ineq}
    In a sufficiently small neighborhood of the origin, the restricted potential $W(x)$ satisfies
    \begin{equation}\label{eq;grd-ineq}
        |\tilde{\nabla}W(x)|^2\ge |W(x)-W(0)|^\al
    \end{equation} 
    where $1<\al<2$.
\end{lemma}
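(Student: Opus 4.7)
The plan is to derive the Łojasiewicz-type gradient inequality for $W$ from the classical Łojasiewicz gradient inequality applied to the real analytic partial potential $V_{\bk}$, and then transfer the estimate onto the (merely smooth) center manifold via a Taylor approximation argument.

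First, since $V_{\bk}$ is real analytic on the complement of the collision locus in $\mathbb{C}\mathbb{P}^{k-2}$ and $[s_0]$ is an isolated critical point (by assumption), the classical Łojasiewicz gradient inequality for real analytic functions furnishes $\theta \in [1/2, 1)$ and $C>0$ and a neighborhood of $[s_0]$ on which
$$
|\tilde\nabla V_{\bk}(s)|^2 \;\ge\; C\,|V_{\bk}(s)-V_{\bk}(s_0)|^{2\theta}.
$$
Since the equilibrium $p_0$ is taken to be degenerate, the Hessian of $V_{\bk}$ restricted to the center directions vanishes, so the associated Łojasiewicz exponent along the center directions satisfies $2\theta > 1$.

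Second, because center manifolds are in general only $C^r$ for a prescribed but finite smoothness $r$, the pulled-back function $W(x) = V_{\bk}(s(x))$ with $s(x)=(x,f(x))$ is not real analytic. To bypass this, I would replace $f$ by its Taylor polynomial $f_N$ of degree $N$ at the origin and introduce the analytic surrogate
$$
\widetilde W(x) \;=\; V_{\bk}\bigl(x,\,f_N(x)\bigr).
$$
The function $\widetilde W$ is real analytic and inherits an isolated critical point at $0$ from $V_{\bk}$, so by the classical Łojasiewicz gradient inequality
$$
|\tilde\nabla \widetilde W(x)|^2 \;\ge\; C'\,|\widetilde W(x)-\widetilde W(0)|^{\tilde\al}
$$
for some $\tilde\al \in (1,2)$ in a neighborhood of the origin.

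Third, I would compare $W$ with $\widetilde W$. Because $f-f_N = O(|x|^{N+1})$ we have $|W-\widetilde W| = O(|x|^{N+1})$ and $|\tilde\nabla W - \tilde\nabla \widetilde W| = O(|x|^N)$. Combining this with the analytic Łojasiewicz-type lower bound $|\widetilde W(x)-\widetilde W(0)| \ge C|x|^m$ near an isolated analytic critical point (equivalently, $|x| \lesssim |\widetilde W-\widetilde W(0)|^{1/m}$), one chooses $N$ large enough relative to $m$ and $\tilde\al$ so that the error terms are dominated by the main terms on a small enough neighborhood of $0$. This yields the claimed bound $|\tilde\nabla W(x)|^2 \ge |W(x)-W(0)|^{\al}$ for some $\al\in(1,2)$, after absorbing multiplicative constants by slightly worsening the exponent and shrinking the neighborhood.

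The main obstacle lies in the third step: the bookkeeping for how large $N$ must be chosen in order to guarantee that the nonanalytic correction $W-\widetilde W$ does not spoil the inequality. This reduces to combining the analytic-geometric estimate $|\widetilde W - \widetilde W(0)| \gtrsim |x|^m$ with the Łojasiewicz exponent $\tilde\al$, and choosing $N$ so that $N+1 > m\tilde\al/(2-\tilde\al)$ (or the analogous threshold for the gradient comparison). Since the center manifold can be taken arbitrarily smooth by choosing $r$ large, this is always achievable, giving the desired $\al \in (1,2)$.
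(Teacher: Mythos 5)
Your overall strategy --- replace the finitely smooth center-manifold graph $f$ by its Taylor polynomial $f_N$, apply the classical \L ojasiewicz gradient inequality to the real analytic surrogate $\widetilde W(x)=V_{\bk}(x,f_N(x))$, and transfer the estimate back to $W$ using the order-$N$ agreement --- is the strategy of Lemma 4.4 of \cite{MM25}, which is all the paper itself invokes for this statement. But your third step rests on a false inequality. The lower bound $|\widetilde W(x)-\widetilde W(0)|\ge C|x|^m$ does \emph{not} hold near an isolated critical point of a real analytic function unless the level set $\{\widetilde W=\widetilde W(0)\}$ reduces to the single point $0$, i.e.\ unless $0$ is a strict local extremum. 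For a saddle-type degenerate critical point such as $\widetilde W(x_1,x_2)=x_1^4-x_2^4$, the origin is an isolated critical point and yet $\widetilde W$ vanishes identically on the lines $x_1=\pm x_2$, so no bound of the form $C|x|^m$ exists. Nothing in the hypotheses forces the restricted potential to have a local extremum at the origin (central configurations are frequently saddles of $V_{\bk}$ on $\cc\pp(k-2)$), so this step can genuinely fail. Moreover, the preliminary claim that $\widetilde W$ ``inherits an isolated critical point at $0$'' is unjustified: the isolated-CC hypothesis controls critical points of $V_{\bk}$ on $\cc\pp(k-2)$ and, via the flow correspondence of Lemma \ref{lem;flow-cen-manf}, critical points of $W$ on the true center manifold; but critical points of the restriction of $V_{\bk}$ to the truncated graph $\{y=f_N(x)\}$ need not correspond to critical points of $V_{\bk}$ at all.

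The repair is not mere bookkeeping. In the region where $|\tilde{\nabla}\widetilde W(x)|$ dominates $|x|^N$ your comparison does go through (there the \L ojasiewicz inequality for $\widetilde W$ transfers to $W$ after adjusting the exponent). In the complementary region, however, one only obtains the \emph{upper} bounds $|W(x)-W(0)|=O(|x|^{N+1})$ (since the gradient inequality for $\widetilde W$ forces $|\widetilde W-\widetilde W(0)|$ to be small wherever $\tilde{\nabla}\widetilde W$ is small) and $|\tilde{\nabla}W(x)|=O(|x|^N)$; what is missing is a \emph{lower} bound on $|\tilde{\nabla}W(x)|$ there, and because $W$ is only finitely smooth, such a bound does not follow from the qualitative fact that $0$ is its only nearby critical point. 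This is exactly the delicate point that the argument of \cite{MM25} is built to handle (their proof brings in additional \L ojasiewicz-type information about the critical set of the analytic approximation rather than a pointwise lower bound $|\widetilde W - \widetilde W(0)|\gtrsim|x|^m$), and it is the same species of gap that invalidated the earlier published proofs discussed in \cite{MM25}. You should either reproduce their case analysis or supply the missing lower bound on $|\tilde{\nabla}W|$ near the critical set of $\widetilde W$.
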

The proofs of the above two lemmas are completely the same as Lemma 4.3 and 4.4 in \cite[Section 4]{MM25}, where the key is \L ojasiewicz inequality \cite{Lj82}.
\begin{lemma}\label{lem;grd-flow}
    Suppose $x(\tau)$ is a solution of a differential equation of the form $x^\prime=k\tilde{\nabla}W(x)+\gamma(x)$ where $k>0$ and $\gamma(x)=o(|\tilde{\nabla}W(x)|)$ and suppose that $W(x)$ satisfies an inequality of the form \eqref{eq;grd-ineq}. Suppose $x(\tau)$ is a solution with $x(\tau)\rightarrow0$ as $\tau\rightarrow +\infty$. Then the arclength of the curve $x(\tau)$ is finite. Here the gradient and arclength are taken with respect to some smooth Riemannian metric.
\end{lemma}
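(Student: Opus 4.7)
The plan is to adapt the classical \L ojasiewicz argument for pure gradient flows to the perturbed system $x' = k\tilde\nabla W(x) + \gamma(x)$, exploiting the fact that $\gamma$ is lower-order compared to $\tilde\nabla W$ near the origin. The key output of \eqref{eq;grd-ineq} with $\alpha<2$ is that $\phi^{1-\alpha/2}$, where $\phi(\tau):=W(0)-W(x(\tau))$, serves as a ``desingularizing function'' whose derivative along the flow controls the speed $|x'|$ and hence the arclength.

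First I would check that $W\circ x$ is eventually nondecreasing. A direct computation gives
$$ \frac{d}{d\tau} W(x(\tau)) = k|\tilde\nabla W(x)|^2 + \langle \tilde\nabla W(x), \gamma(x)\rangle \ge \tfrac{k}{2}|\tilde\nabla W(x(\tau))|^2, $$
for $\tau\ge\tau_0$ large enough, using Cauchy--Schwarz together with $|\gamma(x)| = o(|\tilde\nabla W(x)|)$ as $x\to 0$. Since $x(\tau)\to 0$ and hence $W(x(\tau))\to W(0)$, the monotonicity forces $\phi\searrow 0$ with $\phi\ge 0$ and $-\phi'(\tau)\ge\tfrac{k}{2}|\tilde\nabla W|^2$. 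Combining this with \eqref{eq;grd-ineq} in the form $|\tilde\nabla W|^2\ge\phi^\alpha$, multiplying through by $\phi^{-\alpha/2}$ and applying the \L ojasiewicz bound once more to a single factor of $|\tilde\nabla W|$, I get
$$ -\tfrac{1}{1-\alpha/2}\tfrac{d}{d\tau}\phi^{1-\alpha/2}(\tau) \;=\; -\phi^{-\alpha/2}\phi'(\tau) \;\ge\; \tfrac{k}{2}|\tilde\nabla W(x(\tau))|. $$
Integrating from $\tau_0$ to $T$ produces the uniform bound $\int_{\tau_0}^T |\tilde\nabla W(x(\tau))|\,d\tau \le \tfrac{2}{k(1-\alpha/2)}\phi(\tau_0)^{1-\alpha/2}$, since $\phi(T)\ge 0$ and $1-\alpha/2>0$. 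Because the flow equation together with $|\gamma|=o(|\tilde\nabla W|)$ yields $|x'(\tau)|\le 2k|\tilde\nabla W(x(\tau))|$ for $\tau$ large, the arclength $\int_{\tau_0}^\infty |x'|\,d\tau$ is finite, as desired.

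The only degenerate possibility to dispose of is that $\phi(\tau_0)=0$, or that $\tilde\nabla W$ vanishes along some portion of the trajectory. In such regions monotonicity of $W$ forces $\phi\equiv 0$ thereafter, \L ojasiewicz then forces $|\tilde\nabla W|\equiv 0$, and the flow equation together with $\gamma=o(|\tilde\nabla W|)$ gives $|x'|\equiv 0$, contributing nothing to the arclength. The main (and rather mild) obstacle is simply confirming that the $o(|\tilde\nabla W|)$ perturbation $\gamma$ is benign in two places---preserving the sign of $\tfrac{d}{d\tau}W(x(\tau))$ and providing the linear upper bound on $|x'|$ in terms of $|\tilde\nabla W|$---after which the \L ojasiewicz desingularization carries out the rest of the proof mechanically.
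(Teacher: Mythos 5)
Your proposal is correct and is essentially the same argument the paper relies on: the paper proves this lemma by deferring to Lemma 4.5 of Moeckel--Montgomery, which is exactly the \L ojasiewicz desingularization you carry out (monotonicity of $W$ along the flow up to the benign $o(|\tilde{\nabla}W|)$ perturbation, integration of $\frac{d}{d\tau}\phi^{1-\alpha/2}$ to bound $\int|\tilde{\nabla}W|\,d\tau$, and the linear comparison $|x'|\le 2k|\tilde{\nabla}W|$), with the only adjustment being the sign flip from $k<0$ to $k>0$, which you handle correctly by taking $\phi=W(0)-W(x(\tau))$ nonincreasing to zero.
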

This lemma can be proven almost the same as Lemma 4.5 in \cite[Section 4]{MM25}, where the only difference is here $k>0$, while it is negative in \cite{MM25}, but the same proof goes through as well, since \eqref{eq;grd-ineq} still holds if we replace $W(x)$ by $-W(x)$.\\
\indent Lemma \ref{lem;grd-flow} then implies the following result.
 \begin{theorem}
\label{thm;flow-center-mnf}
    Suppose $p_0=(0, v_0, s_0, 0)$ is a degenerate restpoint with $v_0>0$. Let $\mathcal{U}$ be a sufficiently small neighborhood of $p_0$ and let $W_{\mathcal{U}}^c$ be the local center manifold. If $\tilde{\gamma}(\tau)$ is a solution in $W_{\mathcal{U}}^c$ which converges to $p_0$ as $\tau\rightarrow+\infty$, then $\tilde{\gamma}$ has finite arclength.
\end{theorem}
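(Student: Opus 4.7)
The plan is to reduce the question to the three lemmas already proved (Lemma \ref{lem;flow-cen-manf}, Lemma \ref{lem;grd-ineq}, Lemma \ref{lem;grd-flow}) by working in the graph parametrization of $W_{\mathcal U}^c$ over $\mathcal U \subset \mathbb R^{k}$. Concretely, I would pull back $\tilde{\gamma}(\tau)$ under the graph map to obtain a smooth curve $x(\tau) \in \mathcal U$ with $x(\tau) \to 0$ as $\tau \to \infty$. By Lemma \ref{lem;flow-cen-manf}, $x(\tau)$ satisfies an ODE of the gradient-perturbation form
\[
    x' = k\, \tilde{\nabla} W(x) + \gamma(x), \qquad k = 2/v_0 >0, \qquad \gamma(x) = o(|\tilde{\nabla} W(x)|),
\]
which matches exactly the hypothesis required by Lemma \ref{lem;grd-flow}.

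Next, Lemma \ref{lem;grd-ineq} supplies the \L ojasiewicz-type gradient inequality $|\tilde{\nabla}W(x)|^2 \ge |W(x)-W(0)|^\al$ with $1 < \al < 2$ in a sufficiently small neighborhood of the origin in $\mathcal U$. Combined with the ODE above, this puts $x(\tau)$ squarely in the setting of Lemma \ref{lem;grd-flow}, which immediately yields that the arclength of $x(\tau)$ with respect to any smooth Riemannian metric on $\mathcal U$ is finite.

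Finally, I would transfer this conclusion back to $W_{\mathcal U}^c$: since the graph map parametrizing $W_{\mathcal U}^c$ is a smooth embedding and its image is the curve $\tilde{\gamma}(\tau)$, the arclength of $\tilde{\gamma}$ measured in any smooth Riemannian metric on $W_{\mathcal U}^c$ (in particular the metric inherited from the Euclidean structure on the $(v,w)$-factors together with the Fubini-Study metric on the $s$-factor) is bounded above by a uniform constant times the arclength of $x(\tau)$, hence is finite.

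The main obstacle is essentially nominal, since all of the heavy lifting has been done by the cited Moeckel--Montgomery lemmas, whose crucial ingredient is \L ojasiewicz's inequality for real-analytic functions. The only conceptual point requiring attention is that in the present parabolic setting the constant $k = 2/v_0$ is positive, whereas in \cite{MM25} the analogous constant is negative. This discrepancy, however, does not affect the applicability of Lemma \ref{lem;grd-flow}: the gradient inequality \eqref{eq;grd-ineq} is invariant under $W \mapsto -W$, so the sign of $k$ is irrelevant to the finite-arclength conclusion, as was already noted in the remark preceding this theorem.
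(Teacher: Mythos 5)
Your proposal is correct and follows essentially the same route as the paper, which obtains Theorem \ref{thm;flow-center-mnf} directly by chaining Lemma \ref{lem;flow-cen-manf}, Lemma \ref{lem;grd-ineq} and Lemma \ref{lem;grd-flow} in the graph coordinates of the center manifold, with the same observation that the sign of $k=2/v_0$ is immaterial because inequality \eqref{eq;grd-ineq} is invariant under $W\mapsto -W$. The only addition you make is to spell out the (routine) transfer of finite arclength from the chart $\mathcal{U}$ back to $W_{\mathcal{U}}^c$ via the smooth graph embedding, which the paper leaves implicit.
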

Since the finite arclength of $\tilde \gm$ implies $L(\tilde s)<\infty$, this finishes our proof of Theorem \ref{them;no-spin-para}.

\section{No infinite spin for collision solutions} \label{sec;coll-sol}
In this section, we apply our approach to partial collision solutions and give a proof of Theorem \ref{thm;par-coll} different from \cite{GSZ24}. Without loss of generality, let's assume $q: (0, T) \to \be$, be a $\bk$-collision solution, with $\bk=\{1, \dots, k\}$ and $2 \le k \le n$.

As the same set of coordinates $(r, \rho, \tht, \dot{\tht}, s, \om, c, \dot{c})$ given in Section \ref{sec;co-tran} still applies, we get the same time-dependent Euler-Lagrange equation given in \eqref{eq;EL-eq}. To obtain the corresponding asymptotic estimates for the time-dependent terms, we need the following lemmas.
\begin{lemma}
\label{lem;est-col-orbit}
    Assume $(s(t),1)$ converges to an $(s_0, 1) \in \rr^{2k-2} \setminus \Delta_{\bk}$, as $t\rightarrow T$, then we  have the following asymptotic estimates:
    \begin{align}
       \left| \frac{\partial U_{\mathbf{k},\mathbf{k}^\prime}}{\partial r}(t) \right|=O(1),\ \ \ \left|\frac{\partial U_{\mathbf{k},\mathbf{k}^\prime}}{\partial \theta}(t)\right|=O(r),\ \ \ |\nabla U_{\mathbf{k},\mathbf{k^\prime}}|=O(r).
    \end{align}
\end{lemma}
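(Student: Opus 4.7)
The plan is to mirror the computation in the proof of Lemma \ref{lem;est-orbit}, swapping the parabolic-regime estimates for their collision-regime counterparts. First, I would write each of the three partial derivatives of $U_{\bk,\bk'}$ as a sum over $i\in\bk$, $j\in\bk'$ of inner products
$$\frac{\partial U_{\bk,\bk'}}{\partial \xi}(t) = \sum_{i\in\bk,\,j\in\bk'}\left\langle -\frac{m_im_j(q_i(t)-q_j(t))}{|q_i(t)-q_j(t)|^3},\ \frac{\partial z_i}{\partial \xi}\right\rangle,\qquad \xi\in\{r,\theta,s\},$$
and then bound the two factors in each summand independently.

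The first key step is to control the cross-distances $|q_i-q_j|$ with $i\in\bk$ and $j\in\bk'$. By Definition \ref{df;par-coll}, as $t\to T$ every $q_i$ with $i\in\bk$ converges to the common collision point $q^*$, while every $q_j$ with $j\in\bk'$ converges to some limit $q^*_j\ne q^*$. Hence the cross-distances stay bounded below by a positive constant, so $|q_i(t)-q_j(t)|^{-1}=O(1)$ as $t\to T$. This is the precise point where the collision analysis diverges from the parabolic analysis of Lemma \ref{lem;est-orbit}: in the collision regime $r\to 0$ while the cross-distances remain bounded, whereas in the parabolic regime the cross-distances grow linearly in $t$.

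The second key step is to bound the Jacobian factors $\partial z_i/\partial \xi$. From the coordinate change $z=re^{i\theta}(s,1)/\|(s,1)\|$ and the hypothesis that $(s(t),1)$ converges to a point $(s_0,1)\in\rr^{2k-2}\setminus\Delta_{\bk}$, the unit-norm factor $(s,1)/\|(s,1)\|$ is bounded and bounded away from its singular locus. Differentiating in $r$ preserves the order, giving $|\partial z_i/\partial r|=O(1)$, whereas differentiating in $\theta$ or in $s$ brings a factor of $r$ out front, giving $|\partial z_i/\partial \theta|=O(r)$ and $|\nabla z_i|=O(r)$. Combining with the previous step yields the three claimed estimates $\partial_r U_{\bk,\bk'}=O(1)$, $\partial_\theta U_{\bk,\bk'}=O(r)$ and $|\nabla U_{\bk,\bk'}|=O(r)$. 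I do not foresee any real obstacle here: this lemma is simply the collision-regime analogue of Lemma \ref{lem;est-orbit}, and once the cross-distance bound is in place the rest is routine bookkeeping of orders of magnitude.
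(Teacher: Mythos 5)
Your proposal is correct and is exactly the argument the paper intends: the paper's own proof of this lemma is the one-line remark that it ``follows from a similar argument as in Lemma \ref{lem;est-orbit},'' and your write-up simply fills in that argument, with the only substantive change being that the cross-distances $r_{ij}$, $i\in\bk$, $j\in\bk'$, are now bounded below by a positive constant (by Definition \ref{df;par-coll}) rather than growing like $t$, while the Jacobian bounds $|\partial z_i/\partial r|=O(1)$ and $|\partial z_i/\partial\theta|,|\nabla z_i|=O(r)$ are used exactly as before.
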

\begin{proof}
It follows from a similar argument as in Lemma \ref{lem;est-orbit}.\\
\end{proof}

\begin{lemma}
\label{lem;est-col-mu} Under the same condition as the above lemma, when $t \to T$, 
$$|\mu(t)|=O(r^\frac{5}{2}), \; |P(t)|=O(1), \; |Q(t)|=O(1), $$
where 
\begin{align*}
    P(t)& =\frac{\mu^2(t)}{r^3(t)}+\frac{\partial U_{\mathbf{k},\mathbf{k}^\prime}}{\partial r}(t),   \\
    Q(t)& =\frac{1}{r(t)}\Big(A^{-1}(t)\nabla U_{\mathbf{k},\mathbf{k}^\prime}(t)-\frac{\partial U_{\mathbf{k},\mathbf{k}^\prime}}{\partial\theta}(t)A^{-1}(t)B(t)\Big).
\end{align*} 
\end{lemma}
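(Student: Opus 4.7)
Of the three claimed bounds, $|Q(t)|=O(1)$ is immediate: by Lemma \ref{lem;est-col-orbit} both $|\nabla U_{\bk,\bk'}|$ and $|\partial U_{\bk,\bk'}/\partial\theta|$ are $O(r)$, while the matrices $A^{-1}(s)$ and $B(s)$ stay bounded near the limiting configuration $(s_0,1)\in\mathbb{R}^{2k-2}\setminus\Delta_{\bk}$, so each of the two terms in $Q$ contributes $r^{-1}\cdot O(r) = O(1)$. The bound $|P(t)|=O(1)$ will follow from $|\partial_r U_{\bk,\bk'}|=O(1)$ together with the refined estimate $|\mu(t)| = O(r^{5/2}(t))$, since then $\mu^2/r^3 = O(r^2)=O(1)$. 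Thus the heart of the lemma is the bound on $\mu$, which I would establish in two steps.

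For the crude step, I would first show $\mu(t)\to 0$ as $t\to T$. Applying mass-metric Cauchy--Schwarz to the expression \eqref{eq;angular-mom} gives $|\mu|^2 \le 4\, I_{\bk}\, K_{\bk}$. By definition $I_{\bk}=O(r^2)$. For the kinetic energy, I would argue that the cluster energy $h_{\bk}=K_{\bk}-U_{\bk}$ stays bounded as $t\to T$, starting from the identity $\dot h_{\bk}=\sum_{i\in\bk}\nabla_i U_{\bk,\bk'}\cdot\dot z_i$, in which $|\nabla_i U_{\bk,\bk'}|$ is bounded (the $\bk'$-bodies stay at positive distance from the collision point) and $|\dot z_i|=O(r^{-1/2})$ by a standard energy estimate, making $\dot h_{\bk}$ integrable on $(0,T)$. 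Combined with the classical Sundman--Pollard asymptotic $r(t)\asymp(T-t)^{2/3}$ and $U_{\bk}\asymp 1/r$ for an isolated cluster collision, this yields $K_{\bk}=O(1/r)$, and hence $|\mu(t)|=O(r^{1/2}(t))\to 0$.

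For the refined step, I would integrate the equation $\dot\mu = \partial U_{\bk,\bk'}/\partial\theta$ from \eqref{eq;EL-eq}. By Lemma \ref{lem;est-col-orbit} combined with the Sundman--Pollard asymptotic, $|\dot\mu(t)|=O(r(t))=O((T-t)^{2/3})$ is integrable on $(t,T)$, and using the crude limit $\mu(T^-)=0$,
\begin{equation*}
|\mu(t)|=\left|\int_t^T \dot\mu(\tau)\,d\tau\right|\le C\int_t^T (T-\tau)^{2/3}\,d\tau = O\big((T-t)^{5/3}\big)=O\big(r^{5/2}(t)\big),
\end{equation*}
as claimed. The main obstacle in this plan is the invocation of the Sundman--Pollard facts (the asymptotic $r\asymp (T-t)^{2/3}$ and the boundedness of $h_{\bk}$) in the partial-collision setting rather than the total one; these are classical consequences of treating $U_{\bk,\bk'}$ as a bounded smooth perturbation of the singular internal dynamics, but transplanting them to the cluster does require some verification. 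Once they are in place, the Cauchy--Schwarz-plus-integration argument above delivers all three bounds.
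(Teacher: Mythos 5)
Your proposal is correct and follows essentially the same route as the paper: the Cauchy--Schwarz bound $\mu^2\le C\,I_{\bk}K_{\bk}$ combined with Sundman's estimates to get $\mu(t)\to0$, then integration of $\dot\mu=\partial U_{\bk,\bk'}/\partial\theta=O(r)=O((T-t)^{2/3})$ up to $T$ to obtain $|\mu|=O(r^{5/2})$, from which $P$ and $Q$ are bounded via Lemma \ref{lem;est-col-orbit}. The Sundman asymptotics for cluster collisions that you flag as needing verification are exactly what the paper imports from \cite{Sp70} and \cite[Proposition 6.25]{FT04}, so no new argument is required there.
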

\begin{proof}
    Since $\mu$ is just the angular momentum of the $\bk$-subsystem as \eqref{eq;angular-mom}, there is a positive constant $C$, such that
    \begin{equation*}
        \mu^2\le C I_\mathbf{k}K_{\mathbf{k}}.
    \end{equation*} 
    Combining this with Sundman's estimates(see \cite{Sp70} or \cite[Proposition 6.25]{FT04}), 
    $$ I_{\mathbf{k}}(t)\sim |T-t|^\frac{4}{3}, \; K_{\mathbf{k}}(t)\sim |T-t|^{-\frac{2}{3}}, \; \text{ as } t \to T. $$
    We get 
    \begin{equation}
    \label{eq;mu-to-0} \mu(t) \to 0, \text{ as } t \to T. 
    \end{equation}

    Meanwhile by Lemma \ref{lem;est-col-orbit}, there is a constant $C>0$, such that for any $0<t_1<t_2<T$ 
    \begin{align*}
        |\mu(t_2)-\mu(t_1)|&\le\int^{t_2}_{t_1}|\dot{\mu}|\,dt =\int^{t_2}_{t_1}|\frac{\partial U_{\mathbf{k},\mathbf{k}^\prime}}{\partial \theta}(t)| \,dt\\
        &\le C\int^{t_2}_{t_1}(T-t)^\frac{2}{3}\,dt\\
        &=\frac{3}{5}C\Big((T-t_1)^\frac{5}{3}-(T-t_2)^\frac{5}{3}\Big).
    \end{align*} 
Together with \eqref{eq;mu-to-0}, they imply $|\mu(t)|=O(|t-T|^\frac{5}{3})=O(r^\frac{5}{2})$.

The rest of the lemma then follows from the estimate of $\mu(t)$ and Lemma \ref{lem;est-col-orbit}
\end{proof}

By Lemma \ref{lem;est-col-mu}  
$$ \int\frac{|\mu|}{r^2}\ dt<\infty. $$ 
Again by \eqref{eq;tht-dot}, $\theta(t)$ converges to a limit as $t\rightarrow T$, if 
\begin{align*}
    L(s)=\int ||\omega(t)||_{FS}\ dt<\infty.
\end{align*}

Using the McGehee coordinate, 
$$ v=\sqrt{r}\rho, \; w=r^{\frac{3}{2}}\omega, \; d\tau =r^{-\frac{3}{2}} dt. $$
We obtain the blown-up equation for $\gm(\tau) = (r, v, s, w)(\tau)$, 
\begin{equation} \label{eq;MC-coll}
    \begin{aligned}
        r^\prime=&rv\\
        v^\prime=&\frac{v^2}{2}+F(s,w)-V_{\mathbf{k}}(s)+r^2P(\tau)\\
        s^\prime=&w\\
        w^\prime=&-\frac{vw}{2}+\tilde{\nabla}V_{\mathbf{k}}(s)+\frac{1}{2}\tilde{\nabla}F(s,w)-A^{-1}(s)DA(s)(w)w+r^2Q(\tau)
    \end{aligned}
\end{equation}
Notice that the new time variable $\tau \to \infty$, when $t \to T$.

The assumption of Theorem \ref{thm;par-coll} implies $\gm(\tau)$ converges to an isolated equilibrium $p_0 =(0, v_0, s_0, 0)$ with $v_0 = -\sqrt{2V_{\bk}(s_0)} <0$ and $\tilde{\nabla}V_{\bk}(s_0)=0$.

Since $r^\prime=rv$ and $v\to v_0<0$, we have $r(\tau)<e^{\frac{1}{2}v_0\tau}$, for $\tau$ sufficiently large. As $v_0$ is an eigenvalue, when $\beta\le -v_0$, for any $\al\in(0,\beta)$, we have $e^{\al \tau}r^2(\tau)\rightarrow 0$ as $\tau\rightarrow+\infty$.  Like the parabolic case, $\mathcal M=\{ r =0\}$ is an invariant submanifold of \eqref{eq;MC-coll}, and by Lemma \ref{lem;est-col-mu}, \eqref{eq;MC-coll} satisfies the conditions required for \eqref{eq;nonauto}. Therefore by Theorem \ref{thm;cen-mfd-time-dep}, we can obtain a result similar to Proposition \ref{prop;app-sol}. Then Theorem \ref{thm;par-coll} can be proven by an argument similar to those given in Section \ref{sec:no_infinite_spin_for_parabolic_solutions}.


\section{Proof of Theorem \ref{thm;cen-mfd-time-dep}} \label{sec:appendix}
We shall introduce some notations first. Let $A=Df(0)$ and $h(x)=f(x)-Ax$. The space $\mathbb{R}^d$ can be decomposed as the sum of a stable, an unstable and a center subspace, respectively spanned by the eigenvectors of $A$ corresponding to eigenvalues with negative, positive and zero real part. We thus have 
\begin{align*}
    \mathbb{R}^d=V^{s}\oplus V^u\oplus V^c
\end{align*}
with continuous projections
\begin{align*}
    \pi_s:\mathbb{R}^d\rightarrow V^s,\ \ \ \ \pi_u:\mathbb{R}^d\rightarrow V^u,\ \ \ \ \pi_c:\mathbb{R}^d\rightarrow V^c,
\end{align*}
\begin{align*}
    x=\pi_s x+\pi_c x+\pi_u x.
\end{align*}    
Recall the spectral gap of $A$ is
\begin{align*}
    \beta =\min\{|\text{re}(\lambda)|:\lambda \text{ is\ an \ eigenvalue\ with\ non-zero\ real\ part} \}. 
\end{align*}
For every $\ep\in(0,\beta)$ there exists a constant $C_\ep$,  such that
\begin{equation}
\label{eq;est-decom}
    \begin{aligned}
    &||e^{At}\pi_s||\le C_\ep e^{-(\beta-\ep)t}\ \ \ \ t\ge 0,\\
    &||e^{At}\pi_u||\le C_\ep e^{(\beta-\ep)t}\ \ \ \ \ \ t\le 0,\\
    &||e^{At}\pi_c||\le C_\ep e^{\ep |t|}\ \ \ \ \ \ \ \ \ \ t\in\mathbb{R}.
\end{aligned}
\end{equation}

Following the proof of the center manifold theorem in \cite{Br03}, by using a cutoff function, we may assume $h(x)$ has a compact support and its $\mathcal{C}^{1}$ norm is arbitrarily small.\\
\indent Any solution $t\mapsto y(t)$ of \eqref{eq;nonauto} in $\mathcal{N}$ can be represented by the formula
\begin{align*}
    y(t)=e^A(t-t_0)y(t_0)+\int^{t}_{t_0}e^{A(t-\tau)}h\big(y(\tau)\big)\ d\tau.
\end{align*}
We can decompose this formula as a sum of its center, stable, unstable components. Notice that here we can choose different starting times in connection with different components:
\begin{equation}
\label{eq;var-decom1}
    \begin{aligned}
        y(t)=&\pi_s\Big(e^{A(t-t_0)}y(t_0)+\int^t_{t_0}e^{A(t-\tau)}h\big(y(\tau)\big)\ d\tau\Big)+\\
    &\pi_{cu}\Big(e^{A(t-t_{1})}y(t_{1})+\int^t_{t_{1}}e^{A(t-\tau)}h\big(y(\tau)\big)\ d\tau\Big).
    \end{aligned}
\end{equation}
Here and below $\pi_{cu}=\pi_c+\pi_u$ denotes the projection on the center-unstable space $V^c\oplus V^u$.\\
\indent Let $x:[0,+\infty)\mapsto\mathbb{R}^d$ be a solution of \eqref{eq;nonauto} which satisfies condition \eqref{eq;con-nonauto}. We extend $x(\cdot)$ to a bounded function $x^*(\cdot)$ defined on the whole real line by setting
$$ x^*(t) = \begin{cases}
x(t) & \text{ if}\ \ \ t \geq 0, \\
x(0) & \text{ if}\ \ \ t < 0.
\end{cases}
$$

Notice that $x^*$ provides a globally bounded solution to
\begin{align*}
    \frac{d}{dt} x^*(t)=Ax^*+h(x^*)+g(x^*,t)+\varphi(t)
\end{align*}
where
$$ 
\varphi(t)= \begin{cases}
0 &\ \ \ \ \text{if}\ \  t>0,\\
      -Ax(0)-h\big(x(0)\big)-g\big(x(0),t\big)&\ \ \ \ \text{if}\ \ t<0.
\end{cases}
$$
Although $x^*$ is not differentiable at the point 0, it still can be represented by the variation formula
\begin{equation}
\label{eq;var-decom2}
    \begin{aligned}
        x^*(t)&=e^{A(t-t_0)}\pi_sx^*(t_0)+\int^t_{t_0}e^{A(t-\tau)}\pi_sh\big(x^*(\tau)\big)\ d\tau\\
   &+\int^t_{t_0}e^{A(t-\tau)}\pi_{s}\Big[g\big(x^*(\tau),\tau\big)+\varphi(\tau)\Big]\ d\tau+e^{A(t-t_1)}\pi_{cu}x^*(t_1)\\
   &+\int^t_{t_1}e^{A(t-\tau)}\pi_{cu}h\big(x^*(\tau)\big)\ d\tau+\int^t_{t_1}e^{A(t-\tau)}\pi_{cu}\Big[g\big(x^*(\tau),\tau\big)+\varphi(\tau)\Big]\ d\tau.
    \end{aligned}
\end{equation}

Fix any number $\eta\in (0,\beta)$, we define a space of functions
$$ Z_\eta:=\{z\in C^0(\mathbb{R},\mathbb{R}^d):\ ||z||_\eta:=\underset{t}{\sup}\ e^{\eta t}|z(t)|<\infty\}. 
$$
Notice that $Z_\eta$ is a Banach space with the norm $||\cdot||_{\eta}$ given above. We claim that there exists a function $z\in Z_{\eta}$ such that $y=x^*+z$ is a global solution of \eqref{eq;nonauto} entirely contained in $\mathcal{N}$. From \eqref{eq;var-decom1} and \eqref{eq;var-decom2}, for any choice of $t_0$, $t_1$ such a function $z(\cdot)$ should provide a solution to the integral equation
\begin{align*}
    z(t)&=e^{A(t-t_0)}\pi_sz(t_0)+\int^t_{t_0}e^{A(t-\tau)}\pi_s\Big[h\big(x^*(\tau)+z(\tau)\big)-h\big(x^*(\tau)\big)\Big]\ d\tau\\
   &-\int^t_{t_0}e^{A(t-\tau)}\pi_{s}\Big[g\big(x^*(\tau),\tau\big)+\varphi(\tau)\Big]\ d\tau+e^{A(t-t_1)}\pi_{cu}z(t_1)\\
   &+\int^t_{t_1}e^{A(t-\tau)}\pi_{cu}\Big[h\big(x^*(\tau)+z(\tau)\big)-h\big(x^*(\tau)\big)\Big]\ d\tau\\
   &-\int^t_{t_1}e^{A(t-\tau)}\pi_{cu}\Big[g\big(x^*(\tau),\tau\big)+\varphi(\tau)\Big]\ d\tau.
\end{align*}
Letting $t_0\rightarrow-\infty$ and $t_1\rightarrow+\infty$, by \eqref{eq;est-decom} and the definition of $Z_{\eta}$, we obtain
\begin{equation}
    \begin{aligned}
        z(t)&=\int^t_{-\infty}e^{A(t-\tau)}\pi_s\Big[h\big(x^*(\tau)+z(\tau)\big)-h\big(x^*(\tau)\big)\Big]\ d\tau\\
   &-\int^t_{-\infty}e^{A(t-\tau)}\pi_{s}\Big[g\big(x^*(\tau),\tau\big)+\varphi(\tau)\Big]\ d\tau\\
   &-\int_t^{+\infty}e^{A(t-\tau)}\pi_{cu}\Big[h\big(x^*(\tau)+z(\tau)\big)-h\big(x^*(\tau)\big)\Big]\ d\tau\\
   &+\int_t^{+\infty}e^{A(t-\tau)}\pi_{cu}\Big[g\big(x^*(\tau),\tau\big)+\varphi(\tau)\Big]\ d\tau\\
   &:=\Lambda(z)(t).
    \end{aligned}
\end{equation}
From the definition of $x^*(t)$ and condition \eqref{eq;con-nonauto}, $g(x^*(t),t)\in Z_\eta$ and we denote $||g||_\eta:=||g(x^*(t),t)||_\eta$. Recalling that $\varphi(\tau)=0$ for $\tau>0$ and estimates \eqref{eq;est-decom}, then we have 
\begin{align*}
    e^{\eta t}|\Lambda(z)(t)|&\le \int^t_{-\infty}C_{\ep}e^{-(\beta-\ep)(t-\tau)}||h||_{C^1}e^{\eta(t-\tau)}||z||_{\eta}\ d\tau\\
   &+\int^t_{-\infty}C_{\ep}e^{-(\beta-\ep)(t-\tau)}e^{\eta(t-\tau)}||g||_\eta+e^{\eta t}\int^{\min\{0,t\}}_{-\infty}C_{\ep}e^{-(\beta-\ep)(t-\tau)}\sup_{t\in\mathbb{R}}|\varphi(t)|\ d\tau\\
   &+\int_t^{+\infty}C_{\ep}e^{\ep(\tau-t)}||h||_{C^1}e^{\eta(t-\tau)}||z||_{\eta}\ d\tau\\
   &+\int_t^{+\infty}C_{\ep}e^{\ep(\tau-t)}e^{\eta(t-\tau)}||g||_\eta+e^{\eta t}\int^0_{\min\{0,t\}}C_{\ep}e^{\ep(\tau-t)}\sup_{t\in\mathbb{R}}|\varphi(t)|\ d\tau\\
\end{align*}
If we let $\ep$ in \eqref{eq;est-decom} sufficiently small, there exist constants $C_1,C_2,C_3>0$ such that when $t>0$,
\begin{align*}
     e^{\eta t}|\Lambda(z)(t)|\le C_1||h||_{C^1}||z||_\eta+C_2||g||_\eta,
\end{align*}
and when $t<0$,
\begin{align*}
    e^{\eta t}|\Lambda(z)(t)|\le C_1||h||_{C^1}||z||_\eta+C_2||g||_\eta+C_3(e^{(\eta-\ep)t}-e^{\eta t}).
\end{align*}
Similarly choose $z_1, z_2 \in Z_{\eta}$, and then we have
\begin{align*}
    e^{\eta t}|\Lambda(z_1)(t)-\Lambda(z_2)(t)|\le C_1||h||_{C^1}||z_1 -z_2||_\eta.
\end{align*}
Thus the map $\Lambda:Z_{\eta}\rightarrow Z_{\eta}$ is a well-defined continuous map and is a strict contraction, provided that the norm $||h||_{\mathcal{C}^1}$ is suitably small. Therefore $\Lambda$ admits a unique fixed point $z\in Z_{\eta}$, which means $y=x^*+z$ represents a trajectory contained in $\mathcal{N}$. For all $t>0$ we now have
\begin{align*}
    |x(t)-y(t)|=|z(t)|\le e^{-\eta t}||z||_{\eta}.
\end{align*}


\section{Proof of Proposition \ref{prop;para-energy}} \label{sec:para-energy}
Let $q(t) = (q_i(t))_{i \in \bn}$ be a $\mathbf{k}$-parabolic solution. We set $z(t) = (z_i(t))_{i \in \bn}=(q_i(t)-c_{\mathbf{k}}(t))_{i \in \bn}$.
\begin{lemma}\label{lem;mut-dis}
    For each $k\in\mathbf{k}$, there is a $\gm_k(t)$ satisfying  $|\gamma_k(t)|=O(t^{-\frac{7}{3}})$ as $t\rightarrow+\infty$, and 
    \begin{equation}\label{eq;mutal-dist-eq}
        m_k\ddot{z}_k=\frac{\partial U_\mathbf{k}}{\partial z_k}+\gamma_k.
    \end{equation}
\end{lemma}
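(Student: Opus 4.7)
The plan is to decompose the equation of motion for $q_k$ into the $U_\bk$-gradient plus a small correction coming from the external masses and the acceleration of $c_\bk$. Starting from Newton's equation $m_k\ddot{q}_k=\nabla_kU(q)=\nabla_kU_\bk+\nabla_kU_{\bk,\bk'}$ and using $z_k=q_k-c_\bk$, together with translation invariance of $U_\bk$ (which gives $\nabla_kU_\bk=\partial U_\bk/\partial z_k$), we land on the identity
$$m_k\ddot{z}_k=\frac{\partial U_\bk}{\partial z_k}+\gamma_k,\qquad \gamma_k:=\nabla_kU_{\bk,\bk'}-m_k\ddot{c}_\bk.$$
So the task is reduced to estimating $\gamma_k$.

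Next I would compute $\ddot{c}_\bk$ explicitly. Summing Newton's equations over $i\in\bk$ and using $\sum_{i\in\bk}\nabla_iU_\bk=0$ (again by translation invariance of $U_\bk$), one finds
$$m_\bk\ddot{c}_\bk=\sum_{i\in\bk,\,j\in\bk'}\frac{m_im_j(q_j-q_i)}{r_{ij}^3},$$
so that
$$\gamma_k=m_k\sum_{j\in\bk'}m_j\left[\frac{q_j-q_k}{r_{kj}^3}-\frac{1}{m_\bk}\sum_{i\in\bk}\frac{m_i(q_j-q_i)}{r_{ij}^3}\right].$$
Bounded term by term this is only $O(t^{-2})$, so some cancellation must be found, and that is the point where the center-of-mass relation $\sum_{i\in\bk}m_iz_i=0$ will enter.

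For fixed $j\in\bk'$, I would write $R_j=q_j-q_k$, so $q_j-q_i=R_j-(z_i-z_k)$. Since $|z_i-z_k|=r_{ik}=O(t^{2/3})$ while $|R_j|=r_{kj}\ge C_3 t$, the ratio $|z_i-z_k|/|R_j|$ is $O(t^{-1/3})$, and a Taylor expansion yields
$$\frac{q_j-q_i}{|q_j-q_i|^3}=\frac{R_j}{|R_j|^3}-\frac{z_i-z_k}{|R_j|^3}+\frac{3(R_j\cdot(z_i-z_k))\,R_j}{|R_j|^5}+O\!\left(\frac{|z_i-z_k|^2}{|R_j|^4}\right).$$
Then I would insert this into the bracket above and take the weighted sum $\sum_im_i/m_\bk$. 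The zeroth-order term $R_j/|R_j|^3$ cancels exactly against the $i=k$ contribution outside the sum, and the first-order terms collapse through the identity $\sum_{i\in\bk}m_i(z_i-z_k)=-m_\bk z_k$ (which is precisely where $\sum_{i\in\bk}m_iz_i=0$ is used) into a single expression linear in $z_k$, of magnitude $O(|z_k|/|R_j|^3)$.

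Finally, inserting the parabolic estimates $|z_k|=O(t^{2/3})$ and $|R_j|\ge C_3 t$ from Definition \ref{df;k-para}, the leading surviving contribution is of order $t^{2/3}/t^3=t^{-7/3}$, while the quadratic remainder is $O(t^{4/3}/t^4)=O(t^{-8/3})$, giving $|\gamma_k(t)|=O(t^{-7/3})$ as claimed. The only delicate step is bookkeeping: identifying exactly which orders of the expansion are killed by the center-of-mass identity; once that is arranged, the bound falls out by direct size counting.
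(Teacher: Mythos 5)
Your argument is correct and follows essentially the same route as the paper: both write $\gamma_k$ as the external force on body $k$ minus $m_k\ddot c_{\bk}$, Taylor-expand the external forces in the small ratio $O(t^{2/3})/O(t)$, and rely on the cancellation of the zeroth-order (tidal-free) terms, after which everything remaining is $O(t^{2/3}/t^{3})=O(t^{-7/3})$ by size counting. The only cosmetic differences are that the paper expands about $c_{\bk}$ (writing $r_{ij}^{-3}=|q_j-c_{\bk}|^{-3}(1+O(t^{-1/3}))$) rather than about $q_k$, and that the center-of-mass identity is not actually needed for your first-order terms, since each of them is already individually of size $O(t^{-7/3})$.
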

\begin{proof}
(a). When $k \in \bk$, by Newton's equation, 
\begin{equation}
 \label{eq;ddot-zk} \begin{aligned}
 m_k\ddot{z}_k&=\frac{\partial U_\mathbf{k}}{\partial z_k}+\sum_{j\notin\mathbf{k}}\frac{m_km_j(q_j-q_k)}{r_{kj}^3}-m_km_\mathbf{k}^{-1}\sum_{i\in\mathbf{k},j\notin\mathbf{k}}\frac{m_im_j(q_j-q_i)}{r_{ij}^3}\\
    &=\frac{\partial U_\mathbf{k}}{\partial z_k}+\sum_{j\notin\mathbf{k}}\frac{m_jm_k(q_j-c_{\mathbf{k}})}{r_{jk}^3}-\sum_{j\notin\mathbf{k}}\frac{m_jm_k(q_k-c_{\mathbf{k}})}{r_{jk}^3}-m_km_\mathbf{k}^{-1}\sum_{i\in\mathbf{k},j\notin\mathbf{k}}\frac{m_im_j(q_j-q_i)}{r_{ij}^3}.
 \end{aligned}
 \end{equation}  
For each $r_{ij}$ with $i\in\mathbf{k}, j\notin\mathbf{k}$, we have
$$ r_{ij}^2 = |z_i -z_j|^2 = |z_j|^2 \left( 1+ \frac{|z_i|^2 - 2 \langle z_i, z_j\rangle}{|z_j|^2} \right) := |z_j|^2( 1 + f_{ij}). $$
According to Definition \ref{df;k-para}, $|z_i(t)|=O(t^{\frac{2}{3}})$ and $|z_j(t)|\ge Ct$, as $t\rightarrow \infty$. Then
$$|f_{ij}(t)|=O(t^{-\frac{1}{3}}), \text{ when } t \to \infty.$$
This then implies 
\begin{equation*}
    \frac{1}{r_{ij}^3}=\frac{1}{z_{j}^3}(1-\frac{3}{2}f_{ij}+O(f_{ij}^2))=\frac{1}{z_j^3}(1+O(t^{-\frac{1}{3}})).
\end{equation*}
Plug these into \eqref{eq;ddot-zk}, we get  
\begin{align*}
    m_k\ddot{z}_k&=\frac{\partial U_\mathbf{k}}{\partial z_k}+\sum_{j\notin\mathbf{k}}\frac{m_jm_kz_j}{z_j^3}-\sum_{j\notin\mathbf{k}}\frac{m_jm_kz_k}{z_j^3}-m_k\sum_{j\notin\mathbf{k}}\frac{m_jz_j}{z_j^3}+O(t^{-\frac{7}{3}})\\
    &=\frac{\partial U_\mathbf{k}}{\partial z_k}+O(t^{-\frac{7}{3}}) 
\end{align*}
\end{proof}
Recall that $h_\mathbf{k}=\underset{i\in\mathbf{k}}{\sum}\frac{m_i}{2}|\dot{z}_i|^2 -U_{\bk}$. By differentiating both sides, we get   
$$ \dot{h}_{\bk} =\sum_{i \in \bk} \langle m_i\ddot{z}_i,\dot{z}_i\rangle-\sum_{i \in \bk}\langle \frac{\partial U_{\bk}}{\partial z_i},\dot{z}_i\rangle =\sum_{i \in \bk} \langle \gamma_i,\dot{z}_i\rangle.$$
From the Definition \ref{df;k-para} and Lemma \ref{lem;mut-dis}, we have $|\dot{h}_\bk|=O(t^{-\frac{8}{3}})$. Meanwhile Definition \ref{df;k-para} implies $h_\bk$ must converge to zero, thus $|h_\bk|=O(t^{-\frac{5}{3}})$, when $t \to \infty$.\\
\indent (b). With property (a), the desired result now follows directly from \cite[Corollary 4]{MS76}.

\hfill\newline
\noindent{\bf Acknowledgement.}  We thank Professors Alain Chenciner, Rick Moeckel, Richard Montgomery and Piotr Zgliczy\'nski for their interests and comments in our work. 

\bibliographystyle{abbrv}
\bibliography{Ref-Spin}

\end{document}